\documentclass[pdflatex,sn-mathphys-num]{sn-jnl}

\usepackage{graphicx}%
\usepackage{multirow}%
\usepackage{amsmath,amssymb,amsfonts}%
\usepackage{mathtools}%
\usepackage{amsthm}%
\usepackage{mathrsfs}%
\usepackage[title]{appendix}%
\usepackage{xcolor}%
\usepackage{textcomp}%
\usepackage{manyfoot}%
\usepackage{booktabs}%
\usepackage{enumitem}%

\numberwithin{equation}{section}

\theoremstyle{thmstyleone}%
\newtheorem{theorem}{Theorem}[section]
\newtheorem{lemma}[theorem]{Lemma}
\newtheorem{corollary}[theorem]{Corollary}
\newtheorem{proposition}[theorem]{Proposition}

\newtheorem{externaltheoreminner}{Theorem}
\newenvironment{externaltheorem}[1]{%
\renewcommand{\theexternaltheoreminner}{#1}%
\begin{externaltheoreminner}%
}{%
\end{externaltheoreminner}%
}

\theoremstyle{thmstylethree}%

\theoremstyle{thmstyletwo}%
\newtheorem{remark}[theorem]{Remark}

\newcommand{\Rn}{\mathbb R^n}
\newcommand{\R}{\mathbb R}

\newcommand{\Snn}{\mathbb S^{n-1}}

\makeatletter
\newcommand{\norm}[1]{\left\lVert #1\right\rVert\@ifnextchar\bgroup{\norm@sub}{}}
\newcommand{\norm@sub}[1]{_{#1}}
\makeatother
\newcommand{\abs}[1]{\left\lvert #1\right\rvert}

\begin{document}

\title[Extremizers for a trilinear Stein–Weiss inequality with nonnegative weights]{Extremizers for a trilinear Stein–Weiss inequality with nonnegative weights}

\author[1]{\fnm{Chengcheng} \sur{Wu}}\email{wuchengcheng@amss.ac.cn}
\author[2]{\fnm{Ziyu} \sur{Gan}}\email{24B912023@stu.hit.edu.cn}
\author*[3]{\fnm{Yongliang} \sur{Zhou}}\email{zhouyongliang@hrbeu.edu.cn}

\affil[1]{\orgdiv{School of Mathematical Sciences}, \orgname{Shanxi University}, \orgaddress{\city{Taiyuan}, \country{P.R. China}}}

\affil[2]{\orgdiv{School of Mathematics}, \orgname{Harbin Institute of Technology}, \orgaddress{\city{Harbin}, \country{P.R. China}}}

\affil*[3]{\orgdiv{College of Mathematical Sciences}, \orgname{Harbin Engineering University}, \orgaddress{\city{Harbin}, \country{P.R. China}}}

\abstract{
	We study extremizers for a trilinear Stein-Weiss inequality on $\mathbb{R}^n$. Within the known boundedness region, we prove attainment under two additional assumptions: all six weight exponents are nonnegative, and at least one pair of Lebesgue exponents is admissible.
	The proof combines symmetric decreasing rearrangement with a logarithmic radial reduction to a translation-invariant bilinear operator on $\mathbb{R}$ whose kernel belongs to $L^1\left(\mathbb{R}^2\right)$. A common-scale compactness argument rules out relative separation of the two arguments and yields norm attainment. We then derive the Euler-Lagrange system. In the fully symmetric case, every normalized nonnegative extremizing triple is diagonal. Finally, we establish the origin-centered Kelvin invariance of the resulting scalar equation at the scaling exponent and record the unweighted conformal example.
	}

\keywords{Trilinear Stein--Weiss inequality, extremizers, weighted fractional integrals, Euler--Lagrange equation}

\pacs[MSC Classification]{42B20, 26D15, 46E30, 45G05}

\maketitle
\section{Introduction and main results}\label{sec:introduction}

In this paper, we study the attainability of the best constant for the trilinear Stein–Weiss inequality
\begin{equation}\label{ineq1}
	|\Lambda_A(f_1,f_2,f_3)|\le C\prod_{i=1}^3 \|f_i\|_{L^{p_i}(\Rn)}.
\end{equation}
Here \(\Lambda_A(f_1,f_2,f_3)\) denotes the trilinear Stein–Weiss functional
\begin{equation}\label{eq:Lambda-a}
	\Lambda_A(f_1,f_2,f_3):=
	\int_{\mathbb{R}^n} \int_{\mathbb{R}^n} \int_{\mathbb{R}^n} \prod_{1 \leqslant i<j \leqslant 3}\left|x_i-x_j\right|^{-\alpha_{i j}} \prod_{i=1}^3 f_i\left(x_i\right)\left|x_i\right|^{-\alpha_{i }} d x_1 d x_2 d x_3.
\end{equation}
The problem of the attainability of the best constant for \eqref{ineq1} asks whether the supremum
\[
C_A(p_1,p_2,p_3):=\sup_{\|f_i\|_{L^{p_i}(\Rn)}=1,\, i=1,2,3}|\Lambda_A(f_1,f_2,f_3)|
\]
is attained by a triple \((f_1,f_2,f_3)\) of unit norm.

The inequality \eqref{ineq1} is a natural trilinear extension of the classical Stein–Weiss inequality
\[
\biggl|\int_{\mathbb{R}^n} \int_{\mathbb{R}^n} \frac{f(x)\,g(y)}{|x|^\alpha|x-y|^\lambda|y|^\beta}\,dx\,dy\biggr| \leqslant C\,\|f\|_{L^p(\mathbb{R}^n)}\|g\|_{L^q(\mathbb{R}^n)}.
\]
The Stein--Weiss inequality, often viewed as the doubly weighted Hardy--Littlewood--Sobolev inequality, is the basic power-weighted estimate for the Riesz potential \cite{SteinWeiss1958}. It has served as a model for the development of the one- and two-weight theory of fractional integrals, from the foundational criteria of Muckenhoupt--Wheeden and Sawyer to sharp quantitative and multilinear extensions \cite{MuckenhouptWheeden1974,Sawyer1988,SawyerWheeden1992,LaceyMoenPerezTorres2010,LiMoenSun2015}. Recent work has continued to develop upper-half-space and endpoint variants of Stein--Weiss type inequalities \cite{LiShenSquassinaYang2024,SunWang2026}.
In the unweighted Hardy--Littlewood--Sobolev case, Lieb \cite{Lieb1983} determined the sharp constant and the full conformal family of extremizers. Related compactness and symmetry methods include rearrangement inequalities, concentration--compactness, competing symmetries, and inversion positivity \cite{BrascampLiebLuttinger1974,Lions1984I,Lions1984II,AlmgrenLieb1989,FrankLieb2010}. For the Stein--Weiss inequality with nonnegative power-weight exponents, Lieb \cite{Lieb1983} also proved the existence of extremizers. Chen, Lu, and Tao later extended the Euclidean existence theory beyond the regime in which both power-weight exponents are nonnegative, using concentration--compactness, and established corresponding results on the Heisenberg group \cite{CLT}. For a survey of geometric inequalities in harmonic analysis, including fractional integral inequalities and related extremal problems, see \cite{chen2018geometric} and the references therein.

Multilinear fractional integrals and their weighted variants have likewise attracted sustained attention. The Brascamp--Lieb inequalities provide a powerful framework for multilinear forms generated by linear maps, particularly in Gaussian settings \cite{BrascampLieb1976,Lieb1990,BennettCarberyChristTao2008}. Shi, Wu, and Yan established boundedness characterizations and endpoint estimates for multilinear fractional integral operators with correlation kernels \cite{ShiWuYan2019}. For general multilinear fractional integral inequalities with power weights, also referred to as the $k$-linear Stein--Weiss inequality, Zhou et al. obtained necessary conditions, sufficient conditions, and a complete characterization in a distinguished case \cite{ZhouDengWuYan2022}. Recently, a complete characterization of the inequality \eqref{ineq1} was obtained in \cite{ZhouXuWu2026}.

Once the boundedness region is known, three natural variational questions arise: Is the best constant attained? Can the extremizers be characterized? Can the best constant be evaluated explicitly? For the Stein--Weiss inequality, sharp constants are available in several boundary or special parameter regimes \cite{Beckner2008,WuShiYan2014}, while the existence of extremizers in the interior is known from \cite{Lieb1983,CLT}. These results motivate the corresponding questions for the inequality \eqref{ineq1}.

For inequality \eqref{ineq1}, Wu and Yan obtained an integral formula for the best constant in the boundary case --- $\sum_{i=1}^{3}1/p_i=1$ \cite{wu2016sharp}. 
We therefore begin with the interior ($\sum_{i=1}^{3}1/p_i > 1$) attainment problem. 
The full problem remains difficult, and the present paper establishes attainment in the subregion characterized by nonnegative
weights and the pairwise condition. More precisely, by combining the known boundedness theorem with Lieb's framework for Stein--Weiss extremizers, we prove attainment under two additional assumptions: all power-weight exponents are nonnegative, and at least one pair of Lebesgue exponents is admissible, in the sense specified below. We then derive the Euler--Lagrange system for extremizers. In the fully symmetric case, we prove that every normalized nonnegative extremizing triple is diagonal, that is, its three components agree almost everywhere. At the scaling exponent, we also establish the Kelvin invariance of the resulting scalar equation.

To state our results, We first recall the boundedness theorem for functional \eqref{eq:Lambda-a} in the form used below.
\begin{externaltheorem}{A}[Theorem 1, \cite{ZhouXuWu2026}]
\label{thm:boundedness}
Assume \(1<p_i<\infty\), \(i=1,2,3\), and \(p_i'=p_i/(p_i-1)\).  Then the inequality \eqref{ineq1} holds for all \(f_i\in L^{p_i}(\Rn)\) if and only if
\begin{gather}
\frac1{p_1}+\frac1{p_2}+\frac1{p_3}
+\frac{\alpha_1+\alpha_2+\alpha_3+\alpha_{12}+\alpha_{13}+\alpha_{23}}{n}=3,\label{eq:scale}\\
\alpha_1+\alpha_2+\alpha_3\ge0,\label{eq:sumone}\\
\alpha_{ij}<n\quad (i\ne j),\label{eq:pairlt}\\
\alpha_i<\frac{n}{p_i'}\quad (i=1,2,3),\label{eq:onelt}\\
\alpha_i+\sum_{j\ne i}\alpha_{ij}>\frac{n}{p_i'}\quad (i=1,2,3),\label{eq:tailgt}\\
\alpha_{12}+\alpha_{13}+\alpha_{23}<2n,\label{eq:pairsum}\\
\frac1{p_1}+\frac1{p_2}+\frac1{p_3}\ge1.\label{eq:pisum}
\end{gather}
\end{externaltheorem}

The functional \eqref{eq:Lambda-a} contains both one-body and pairwise singular weights. The scale condition \eqref{eq:scale} always leaves a common dilation symmetry. Depending on the point weights, translation invariance may be broken, but dilation remains a serious source of noncompactness. A maximizing sequence may concentrate at a common scale, drift to a different scale, or split so that different components occupy separated logarithmic scales. The pairwise factors create additional singular regimes near partial diagonals. Consequently, compactness does not follow directly from the existing Stein--Weiss theory \cite{Lieb1983,CLT} or from the usual conformal argument for the Hardy--Littlewood--Sobolev inequality.

We treat the attainment problem for the best constant of inequality \eqref{ineq1} through the associated bilinear operator
\begin{equation}\label{eq:def-TA-intro}
T_A(f_1,f_2)(x_3):=
\iint_{\Rn\times\Rn}\frac{f_1(x_1)|x_1|^{-\alpha_1}f_2(x_2)|x_2|^{-\alpha_2}|x_3|^{-\alpha_3}}{
	|x_1-x_2|^{\alpha_{12}}|x_1-x_3|^{\alpha_{13}}|x_2-x_3|^{\alpha_{23}}}\,dx_1dx_2.
\end{equation}
Initially, for bounded compactly supported functions $f_1$ and $f_2$, we define $T_A\left(f_1, f_2\right)$ by the integral above whenever it is finite. The trilinear estimate in Theorem A implies that, for every $\left(f_1, f_2\right) \in L^{p_1}\left(\mathbb{R}^n\right) \times L^{p_2}\left(\mathbb{R}^n\right)$, there exists a unique element $T_A\left(f_1, f_2\right) \in L^{p_3^{\prime}}\left(\mathbb{R}^n\right)$ such that

$$
\int_{\mathbb{R}^n} T_A\left(f_1, f_2\right)(x) h(x) d x=\Lambda_A\left(f_1, f_2, h\right) \quad \text { for all } h \in L^{p_3}\left(\mathbb{R}^n\right)
$$
This defines a bounded bilinear extension. For nonnegative inputs, truncation of the kernel, Tonelli's theorem, and monotone convergence show that this $L^{p_3^{\prime}}$ representative agrees almost everywhere with the extended pointwise integral. If $f_1$ and $f_2$ are radial, rotation invariance of the kernel implies that $T_A\left(f_1, f_2\right)$ is radial as an $L^{p_3^{\prime}}$ function.

We set
\begin{equation}\label{eq:norm-TA-intro}
\norm{T_A}:=
\sup_{0\ne f_i\in L^{p_i},\, i=1,2}
\frac{\norm{T_A(f_1,f_2)}{L^{p_3'}(\Rn)}}{\norm{f_1}{L^{p_1}(\Rn)}\norm{f_2}{L^{p_2}(\Rn)}}.
\end{equation}
By H\"older duality, \(\norm{T_A}=C_A(p_1,p_2,p_3)\).  Hence norm attainment for \(T_A\) yields attainment of the trilinear best constant.
In addition to \eqref{eq:scale}--\eqref{eq:pisum}, we impose two structural assumptions.  First, all weight exponents are nonnegative:
\begin{equation}\label{eq:nonneg}
\alpha_i\ge0\quad (i=1,2,3),\quad
\alpha_{ij}\ge0\quad (1\le i<j\le3).
\end{equation}
This hypothesis is not part of the general boundedness criterion. It is used both in the Brascamp--Lieb--Luttinger rearrangement step and in proving that the reduced kernel in logarithmic variables belongs to \(L^1(\R^2)\). Second, we assume that at least one pair of Lebesgue exponents is admissible in the following sense:
\begin{equation}\label{eq:pairwise}
\max\left\{\frac1{p_1}+\frac1{p_2},\frac1{p_1}+\frac1{p_3},\frac1{p_2}+\frac1{p_3}\right\}\ge1.
\end{equation}
After relabeling the variables, this will be written as
\begin{equation}\label{eq:pair12}
\frac1{p_1}+\frac1{p_2}\ge1.
\end{equation}
In the present proof, this condition is used only in the compactness argument; it prevents the two functions of the bilinear problem from separating at different logarithmic scales.

Our first result is norm attainment for \(T_A\).
\begin{theorem}\label{thm:TA-attainment}
Let \(1<p_i<\infty\) and \(p_i'=p_i/(p_i-1)\) for \(i=1,2,3\). 
Assume that \eqref{eq:scale}--\eqref{eq:pisum}, \eqref{eq:nonneg}, and \eqref{eq:pair12} hold. Then
\[
T_A:L^{p_1}(\Rn)\times L^{p_2}(\Rn)\to L^{p_3'}(\Rn)
\]
attains its norm.  More precisely, there exist nonnegative, radial, and radially nonincreasing functions \(f_1\in L^{p_1}(\Rn)\) and \(f_2\in L^{p_2}(\Rn)\) such that $
\norm{f_1}{L^{p_1}(\Rn)}=1$, $
\norm{f_2}{L^{p_2}(\Rn)}=1$, and 
\[
\norm{T_A(f_1,f_2)}{L^{p_3'}(\Rn)}=
\norm{T_A}.
\]
\end{theorem}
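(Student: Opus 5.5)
We follow the route outlined in the abstract: reduce to radially decreasing inputs, pass to logarithmic variables, and then run a compactness argument at a common scale.

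\textbf{Step 1: reduction to radial functions.} By \eqref{eq:nonneg}, every factor $|x_i|^{-\alpha_i}$ and $|x_i-x_j|^{-\alpha_{ij}}$ is a symmetric decreasing function of $x_i$ or of $x_i-x_j$. The Brascamp--Lieb--Luttinger rearrangement inequality therefore gives
\[
|\Lambda_A(f_1,f_2,f_3)|\le \Lambda_A(f_1^*,f_2^*,f_3^*),
\]
and rearrangement preserves the $L^{p_i}$ norms. Hence $C_A(p_1,p_2,p_3)$ is the supremum of $\Lambda_A$ over nonnegative radially nonincreasing triples. By duality, $\|T_A\|$ is the supremum of $\|T_A(f_1,f_2)\|_{L^{p_3'}}$ over nonnegative radially nonincreasing pairs, since for such inputs the optimal dual $h$ can itself be taken radially decreasing.

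\textbf{Step 2: logarithmic reduction.} For radial $f_i$, write $x_i=e^{t_i}\omega_i$ and set
\[
F_i(t)=e^{nt/p_i}f_i(e^t),
\]
so that $\|f_i\|_{L^{p_i}}=|\Snn|^{1/p_i}\|F_i\|_{L^{p_i}(\R)}$. Integrating out the angles and using \eqref{eq:scale} turns $\Lambda_A$ into a translation-invariant trilinear form on $\R$,
\[
\int_{\R^3}K(t_1-t_3,\,t_2-t_3)\,F_1(t_1)F_2(t_2)F_3(t_3)\,dt,
\]
that is, $F_3\mapsto\int(K*(F_1\otimes F_2))F_3$, with $K\ge0$. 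The key lemma is $K\in L^1(\R^2)$. Integrability near the diagonals follows from \eqref{eq:pairlt} and \eqref{eq:pairsum}. Integrability at infinity in the various directions follows from \eqref{eq:onelt} and \eqref{eq:tailgt}, which give exponential decay of $K$ when one of the $t_i$ tends to $\pm\infty$ relative to the others. Nonnegativity of the exponents keeps the individual angular integrals clean.

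\textbf{Step 3: compactness of maximizing sequences.} Take a maximizing sequence $(f_1^k,f_2^k)$ of nonnegative radially decreasing unit vectors, with corresponding $F_i^k$. Monotonicity gives the pointwise bound
\[
f_i(r)\le C r^{-n/p_i},
\]
so each $F_i^k$ is bounded uniformly in $L^\infty$. Using the dilation symmetry, normalize the scale of $F_1^k$ so that it does not vanish, i.e.\ $\sup_{|t|\le 1}\ldots$ or a fixed-level-set position stays at the origin.

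The main obstacle is to exclude the following scenarios:
\begin{itemize}
\item[(a)] $F_1^k$ and $F_2^k$ have mass escaping to separated logarithmic scales, with $F_2^k$ relatively translated by $\tau_k\to\infty$;
\item[(b)] the functions spread out or vanish.
\end{itemize}
For (a) we use \eqref{eq:pair12}. When $1/p_1+1/p_2\ge 1$, the bilinear form $\iint K(t_1-t_3,t_2-t_3)F_1F_2$, restricted to well-separated profiles, has norm $o(1)$, because $K\in L^1$ concentrates mass near $t_1-t_2=O(1)$. Splitting $K=K\mathbf 1_{|t_1-t_2|\le R}+\text{tail}$ and applying Young/H\"older with exponents where $1/p_1+1/p_2\ge1$ gives this; the tail is small in $L^1$. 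Combining this with a strict superadditivity of the best constant under splitting shows that dichotomy lowers the value below $\|T_A\|$, since $\ell^{p}$-type convexity holds because $p_3'>1$ and $1/p_1+1/p_2\ge1$.

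Monotone radial profiles also prevent vanishing. After the scale normalization, Helly's selection theorem gives pointwise a.e.\ limits $f_i$, and hence $F_i^k\to F_i$ almost everywhere and weakly. A Brezis--Lieb splitting of $\Lambda$, together with the $L^1$ kernel (which makes local interactions continuous under a.e.\ bounded convergence), yields
\[
\|T_A\|^{\,\cdot}\le \|T_A\|\,\big(\|f_1\|^{\theta}\cdots\big),
\]
which forces $\|f_1\|=\|f_2\|=1$ and $\|T_A(f_1,f_2)\|=\|T_A\|$. The limits are nonnegative, radial, and radially nonincreasing.
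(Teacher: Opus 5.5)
Your Steps 1 and 2 are essentially the paper's: rearrangement, logarithmic isometries, and $L_0\in L^1(\R^2)$ from exactly the conditions you cite. The gap is non-vanishing in Step 3. Both versions of your normalization (a large value of $F_1^k$ near $t=0$, or a level set of $f_1^k$ of fixed measure) presuppose a uniform lower bound on $\|F_1^k\|_\infty\simeq\|f_1^k\|_{L^{p_1,\infty}}$. Your claim that monotone radial profiles supply this is false. Take $f_N(x)=N^{-1/p}\min\{1,|x|^{-n/p}\}\mathbf 1_{\{|x|\le e^N\}}$. It is radially nonincreasing with $\int_{\R}F_N^p\,dt=1+\frac1{nN}$, yet $F_N\le N^{-1/p}$. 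So every window of fixed length carries mass $O(1/N)$, and no dilation (a translation in $t$) yields a nonzero a.e.\ limit. Without more, your Helly limits may be $0$.

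The missing idea is that the integrable kernel bounds the output by the sup norms of the profiles. Choose $q_i\ge p_i$ with $1/q_1+1/q_2=1$, which \eqref{eq:pair12} permits. H\"older gives $\|B_0(U,V)\|_1\le\|L_0\|_1\|U\|_{q_1}\|V\|_{q_2}$, trivially $\|B_0(U,V)\|_\infty\le\|L_0\|_1\|U\|_\infty\|V\|_\infty$, and interpolation yields
\[
\|B_0(F_1,F_2)\|_{p_3'}\le\|L_0\|_1\prod_{i=1}^2\|F_i\|_{p_i}^{\theta_i}\|F_i\|_\infty^{1-\theta_i},\qquad \theta_i=\frac{p_i}{q_i\,p_3'}<1 .
\]
Along a normalized maximizing sequence this forces $\liminf_k\|F_i^k\|_\infty>0$. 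Only then does monotonicity help: $F_1^k(t_k)\ge c$ implies $F_1^k\ge e^{-n/p_1}c$ on $[t_k-1,t_k]$, and a common dilation moves this window to the origin.

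Your exclusion of dichotomy is likewise only a slogan. No ``strict superadditivity'' inequality is formulated, and your final display contains placeholders. What is needed is a Brezis--Lieb splitting of the \emph{output} $B_0(F_1^k,F_2^k)$ in $L^{p_3'}$; it converges a.e.\ by dominated convergence, using $L_0\in L^1$ and the uniform sup bounds. You also need the cross terms $B_0(F_1,F_2^k-F_2)$ and $B_0(F_1^k-F_1,F_2)$ to vanish in $L^{p_3'}$. The paper proves this by approximating $F_1$ (resp.\ $F_2$) and $L_0$ by bounded compactly supported functions, together with the estimate above. With $m_i=\|F_i\|_{p_i}^{p_i}$ and $\gamma_i=p_3'/p_i$ this gives
\[
1\le m_1^{\gamma_1}m_2^{\gamma_2}+(1-m_1)^{\gamma_1}(1-m_2)^{\gamma_2},\qquad \gamma_1+\gamma_2\ge p_3'>1 .
\]
Weighted AM--GM then forces $m_1=m_2=1$ as soon as $m_1+m_2>0$.

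The paper first proves a common-scale tightness lemma so that both limits are nonzero. For a truncated kernel $K$ supported in $[-R,R]^2$ it bounds $|B_K(\Phi_1,\Phi_2)(u)|\le CA_1(u)^{1/p_1}A_2(u)^{1/p_2}$, where $A_i$ is the local mass on $[u-R,u+R]$. Then $p_3'(1/p_1+1/p_2)>1$ rules out $\sup_u\min\{A_1,A_2\}\to0$. The inequality above shows that one nonzero limit already suffices. So your plan of fixing only the scale of $F_1$ would work, with no separate treatment of scenario (a), once you actually supply the weak-type non-vanishing estimate and this splitting inequality.
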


The trilinear extremizer follows by duality.
\begin{corollary}\label{cor:trilinear-attainment}
Let \(1<p_i<\infty\) and \(p_i'=p_i/(p_i-1)\) for \(i=1,2,3\). 
Assume that \eqref{eq:scale}--\eqref{eq:pisum}, \eqref{eq:nonneg}, and
\eqref{eq:pairwise} hold. Then there exist nonnegative, radial, and radially
nonincreasing functions \(f_i\in L^{p_i}(\Rn)\), \(i=1,2,3\), such that $
\norm{f_i}{L^{p_i}(\Rn)}=1$ for each $i$
and
\[
\Lambda_A(f_1,f_2,f_3)=C_A(p_1,p_2,p_3).
\]
\end{corollary}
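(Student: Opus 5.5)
The plan is to deduce Corollary~\ref{cor:trilinear-attainment} from Theorem~\ref{thm:TA-attainment} by relabeling and duality.

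\emph{Relabeling.} Assumption \eqref{eq:pairwise} gives a pair $\{i,j\}$ with $1/p_i+1/p_j\ge1$. Permute the indices so that this pair becomes $\{1,2\}$, and permute the exponents $\alpha_k$, $\alpha_{kl}$ accordingly. The functional $\Lambda_A$ is symmetric under simultaneous permutation of the variables $x_k$ and of the exponents, because the kernel $\prod_{k<l}|x_k-x_l|^{-\alpha_{kl}}$ is symmetric in its pairs. Conditions \eqref{eq:scale}--\eqref{eq:pisum} and \eqref{eq:nonneg} are invariant under such a permutation. Hence we may assume \eqref{eq:pair12} without loss of generality, and the constant $C_A(p_1,p_2,p_3)$ is unchanged by the relabeling.

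\emph{Applying the theorem.} Theorem~\ref{thm:TA-attainment} provides nonnegative, radial, radially nonincreasing $f_1,f_2$ of unit norm with $\|T_A(f_1,f_2)\|_{L^{p_3'}}=\|T_A\|=C_A$. Set $g:=T_A(f_1,f_2)$. Since $f_1,f_2\ge0$, the remarks after \eqref{eq:def-TA-intro} show that $g\ge0$ a.e., that it agrees with the pointwise integral, and that it is radial. Moreover $C_A>0$, because the kernel is positive, so $g\not\equiv0$.

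\emph{Constructing $f_3$.} Define
\[
f_3:=g^{p_3'-1}\big/\|g\|_{L^{p_3'}}^{p_3'-1}.
\]
Then $\|f_3\|_{L^{p_3}}=1$, since $(p_3'-1)p_3=p_3'$. Furthermore
\[
\Lambda_A(f_1,f_2,f_3)=\int g f_3=\|g\|_{L^{p_3'}}=C_A,
\]
so the triple attains the trilinear constant.

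\emph{Monotonicity of $f_3$.} It remains to show that $f_3$ is radially nonincreasing; the main point is that $g$ is. This holds for the pointwise integral representative, which we may fix. Take $x,y$ with $|x|\le|y|$.
\begin{itemize}
\item The factor $|x_3|^{-\alpha_3}$ is nonincreasing since $\alpha_3\ge0$.
\item The remaining integral is $\iint F_1(x_1)F_2(x_2)|x_1-x_2|^{-\alpha_{12}}|x_1-x_3|^{-\alpha_{13}}|x_2-x_3|^{-\alpha_{23}}\,dx_1\,dx_2$, where $F_k=f_k|\cdot|^{-\alpha_k}$ is symmetric decreasing because $\alpha_k\ge0$.
\end{itemize}
To compare the values at $x$ and $y$, reflect across the hyperplane bisecting $x$ and $y$, or rotate first so that $y$ lies on the ray through $x$. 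The two-point (polarization) rearrangement inequality, equivalently the Brascamp--Lieb--Luttinger inequality with $x_3$ replaced by a symmetric decreasing test function concentrated near the point, then shows that the integral is larger at the point closer to the origin.

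A simpler route avoids this argument. Replace $f_3$ by its symmetric decreasing rearrangement $f_3^*$. Brascamp--Lieb--Luttinger, whose kernels are symmetric decreasing by \eqref{eq:nonneg}, gives $\Lambda_A(f_1,f_2,f_3^*)\ge\Lambda_A(f_1,f_2,f_3)=C_A$. Since $\|f_3^*\|_{L^{p_3}}=1$, equality must hold, and the triple $(f_1,f_2,f_3^*)$ is the required extremizer. This rearrangement step is the only nontrivial point, and it is routine given the tools already used for Theorem~\ref{thm:TA-attainment}.
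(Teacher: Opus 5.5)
Your proposal is correct and matches the paper's argument: relabel so that \eqref{eq:pair12} holds, apply Theorem~\ref{thm:TA-attainment}, set $f_3=g^{p_3'-1}/\|g\|_{p_3'}^{p_3'-1}$ with $g=T_A(f_1,f_2)$, and then pass to symmetric decreasing rearrangements via \eqref{eq:rearranged-trilinear-ineq}. The paper rearranges all three functions, which is the same as your step because $f_1^*=f_1$ and $f_2^*=f_2$ a.e., so the polarization discussion of the monotonicity of $g$ is unnecessary and can be dropped.
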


We next record the variational structure of extremizers. Proposition~\ref{prop:EL} gives the Euler--Lagrange system.  In the fully symmetric case
\[
	p:=p_1=p_2=p_3,
	\quad \alpha:=\alpha_1=\alpha_2=\alpha_3\ge0,
	\quad \lambda:=\alpha_{12}=\alpha_{13}=\alpha_{23},
	\]
with \(0<\lambda<n\), and after writing \(s=1/(p-1)\), the system takes the form
\begin{equation}\label{eq:symm-system}
		\left\{
		\begin{aligned}
			U_1(x)&=C |x|^{-\alpha}\iint_{\Rn\times\Rn}
			\frac{U_2(y)^s |y|^{-\alpha}U_3(z)^s |z|^{-\alpha}}
			{|x-y|^\lambda |x-z|^\lambda |y-z|^\lambda}\,dy\,dz,\\
			U_2(y)&=C |y|^{-\alpha}\iint_{\Rn\times\Rn}
			\frac{U_1(x)^s |x|^{-\alpha}U_3(z)^s |z|^{-\alpha}}
			{|x-y|^\lambda |x-z|^\lambda |y-z|^\lambda}\,dx\,dz,\\
			U_3(z)&=C |z|^{-\alpha}\iint_{\Rn\times\Rn}
			\frac{U_1(x)^s |x|^{-\alpha}U_2(y)^s |y|^{-\alpha}}
			{|x-y|^\lambda |x-z|^\lambda |y-z|^\lambda}\,dx\,dy.
		\end{aligned}
		\right.
\end{equation}
In the fully symmetric setting, the scale condition \eqref{eq:scale} reduces to
$
1/p+(\alpha+\lambda)/n=1,
$
so that $p=n /(n-\alpha-\lambda)$ and $s=(n-\alpha-\lambda) / (\alpha+\lambda)$. Moreover, the pairwise admissibility condition is equivalent to $\alpha+\lambda \leq n / 2$. Under $\alpha \geq 0$ and $\lambda>0$, the remaining boundedness conditions are then automatic. Thus the extremizer theorem applies in the symmetric range $0<\alpha+\lambda \leq n / 2$, whereas the Kelvin-invariance statement below is an independent property of the scalar equation in the larger range $0<$ $\alpha+\lambda<n$.

\begin{theorem}\label{thm:diagonal}
	Assume that the hypotheses of Corollary~\ref{cor:trilinear-attainment} and
	\[
	p:=p_1=p_2=p_3,
	\quad \alpha:=\alpha_1=\alpha_2=\alpha_3\ge0,
	\quad \lambda:=\alpha_{12}=\alpha_{13}=\alpha_{23},
	\]
	where \(0<\lambda\leq(n / 2-\alpha)\) hold. Then every normalized nonnegative extremizing
	triple \((f_1,f_2,f_3)\) for \(C_A(p,p,p)\) satisfies
	\[
	f_1\equiv f_2\equiv f_3
	\quad\text{a.e. in }\Rn.
	\]
	Equivalently, for \(U_i:=f_i^{p-1}\), \(i=1,2,3\), as in
	\eqref{eq:symm-system}, one has \(U:=U_1\equiv U_2\equiv U_3\) a.e.
	With \(s=1/(p-1)\) and \(C=C_A(p,p,p)^{-1}\), the common function
	\(U\) satisfies
	\begin{equation}\label{eq:weighted-scalar}
		U(x)=C|x|^{-\alpha}\iint_{\Rn\times\Rn}
		\frac{U(y)^s|y|^{-\alpha}U(z)^s|z|^{-\alpha}}
		{|x-y|^\lambda |x-z|^\lambda |y-z|^\lambda}\,dy\,dz.
	\end{equation}
\end{theorem}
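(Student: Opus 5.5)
The plan is to show that any normalized nonnegative extremizing triple can be replaced by a diagonal one without lowering the functional, and then to use strict convexity of the $L^p$ norm to force equality. The natural tool is a Cauchy--Schwarz-type inequality for the symmetric trilinear form. Write $K(x,y,z)=|x-y|^{-\lambda}|x-z|^{-\lambda}|y-z|^{-\lambda}|x|^{-\alpha}|y|^{-\alpha}|z|^{-\alpha}$, so that
\[
\Lambda_A(f_1,f_2,f_3)=\iiint K\,f_1(x)f_2(y)f_3(z)\,dx\,dy\,dz .
\]
The kernel $K$ is symmetric under all permutations of $(x,y,z)$.

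\textbf{Step 1: a generalized H\"older inequality.} The key claim is
\begin{equation}\label{eq:plan-holder}
\Lambda_A(f_1,f_2,f_3)\le \prod_{i=1}^3\Lambda_A(f_i,f_i,f_i)^{1/3}\qquad (f_i\ge0).
\end{equation}
To prove \eqref{eq:plan-holder}, fix $z$ and consider the bilinear form $B_z(g,h)=\iint K(x,y,z)g(x)h(y)\,dx\,dy$. This form is symmetric. The sticking point is that Cauchy--Schwarz for $B_z$ needs positive semidefiniteness, which is not obvious for the product kernel. I would first try to establish it through the factorization
\[
|x-y|^{-\lambda}=c\int |x-w|^{-(n+\lambda)/2}|y-w|^{-(n+\lambda)/2}\,dw ,
\]
valid for $0<\lambda<n$. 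The remaining factors $|x-z|^{-\lambda}|x|^{-\alpha}$ then attach to $g(x)$, and the factors $|y-z|^{-\lambda}|y|^{-\alpha}$ attach to $h(y)$ symmetrically. Hence $B_z(g,h)=c\int G_z(w)H_z(w)\,dw$, and therefore
\[
B_z(g,h)\le B_z(g,g)^{1/2}B_z(h,h)^{1/2}.
\]
Apply this with $g=f_1$ and $h=f_2$, integrate against $f_3(z)$, and use H\"older in $z$. This gives
\[
\Lambda(f_1,f_2,f_3)\le \Lambda(f_1,f_1,f_3)^{1/2}\Lambda(f_2,f_2,f_3)^{1/2}.
\]
Iterating over all pairs by symmetry, in the standard "Gowers-type" bootstrap, yields \eqref{eq:plan-holder}. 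In particular $\Lambda(f,f,g)\le\Lambda(f,f,f)^{2/3}\Lambda(g,g,g)^{1/3}$.

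\textbf{Step 2: rigidity of equality.} Let $(f_1,f_2,f_3)$ be extremizing. Then
\[
C_A=\Lambda(f_1,f_2,f_3)\le\prod_i\Lambda(f_i,f_i,f_i)^{1/3}\le C_A .
\]
So each $f_i$ is itself a diagonal extremizer, and equality holds in every Cauchy--Schwarz step. Equality in $\int G_zH_z\le\|G_z\|\|H_z\|$ forces $G_z$ and $H_z$ to be proportional for a.e.\ $z$. The map $g\mapsto G_z$ is injective on nonnegative functions, since it is a Riesz potential of $g\,|\cdot-z|^{-\lambda}|\cdot|^{-\alpha}$. Injectivity then gives $f_1=c(z)f_2$, so $f_1=cf_2$, and the normalization gives $c=1$. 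The same argument applies to the other pairs.

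As a fallback if the equality analysis is delicate, I would use the Euler--Lagrange system \eqref{eq:symm-system} from Proposition~\ref{prop:EL}. Consider $F=\bigl(\tfrac{f_1^p+f_2^p+f_3^p}{3}\bigr)^{1/p}$, or more simply $F=(f_1+f_2+f_3)/\|f_1+f_2+f_3\|_p$. Expanding $\Lambda(F,F,F)$ multilinearly and using \eqref{eq:plan-holder} together with the strict convexity of $\|\cdot\|_p$ shows that $\Lambda(F,F,F)>C_A$ unless $f_1=f_2=f_3$.

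\textbf{Step 3: the scalar equation.} With the $f_i$ equal, \eqref{eq:symm-system} collapses to \eqref{eq:weighted-scalar} with $U=f^{p-1}$. The main obstacle is Step 1: justifying the positive definiteness and factorization of the product kernel with all three singular factors, and checking finiteness of the intermediate integrals. Finiteness follows from Theorem A applied with the same exponents, since $\Lambda(f,f,g)$ is finite.
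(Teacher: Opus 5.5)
Your argument is correct, and it takes a genuinely different route from the paper's.

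\textbf{How the paper argues.} The paper works entirely at the level of the Euler--Lagrange system. It subtracts the first two equations of \eqref{eq:symm-system} and tests the difference against truncations of $U_1^s-U_2^s$. The left side, $\int(U_1-U_2)(U_1^s-U_2^s)\,dx$, is nonnegative by monotonicity of $t\mapsto t^s$. The right side is $-C\int U_3(z)^s|z|^{-\alpha}\mathcal{Q}_z\,dz$, where $\mathcal{Q}_z$ is the Riesz form of $\Psi_z(x)=(U_1^s-U_2^s)(x)|x|^{-\alpha}|x-z|^{-\lambda}$. This is nonpositive by Lemma~\ref{lem:riesz-abs-positive}. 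So the paper needs only \emph{weak} positivity of the Riesz kernel, and the strictness comes from the power map.

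\textbf{How you argue.} You use extremality directly. The composition formula turns $B_z$ into a Gram form, which gives $\Lambda(f_1,f_2,f_3)^2\le\Lambda(f_1,f_1,f_3)\Lambda(f_2,f_2,f_3)$, and extremality then forces equality in Cauchy--Schwarz. Your strictness must therefore come from the kernel. Concretely, you need the uniqueness theorem for Riesz potentials: a nonnegative $g$ with $I_\beta g(w)=\int|w-x|^{\beta-n}g(x)\,dx\in L^2$, $\beta=(n-\lambda)/2$, is determined by $I_\beta g$. Equivalently, the Riesz energy is strictly positive definite. This is a somewhat stronger input than Lemma~\ref{lem:riesz-abs-positive} and should be cited precisely (e.g.\ Landkof) rather than asserted.

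\textbf{What each route buys.} Your route avoids the Euler--Lagrange system and all of its truncation and Fubini bookkeeping. It also yields something the paper does not state: $\Lambda(f_1,f_2,f_3)\le\prod_i\Lambda(f_i,f_i,f_i)^{1/3}$. Hence $C_A(p,p,p)$ equals the best constant of the cubic form $\Lambda(f,f,f)/\norm{f}{p}^3$ for every $0<\lambda<n$. The paper's route gives a different generalization: every nonnegative $L^p$ triple solving the symmetric system is diagonal, whether or not it is extremal.

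\textbf{Two small remarks.}

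First, the Gowers-type bootstrap is not needed for the rigidity. By the sharp inequality itself, $\Lambda(f_1,f_1,f_3)\le C_A$ and $\Lambda(f_2,f_2,f_3)\le C_A$. So extremality already forces equality in the single Cauchy--Schwarz step, pointwise for a.e.\ $z\in\{f_3>0\}$. On that set $B_z(f_i,f_i)<\infty$ for a.e.\ $z$, by Tonelli and Theorem~\ref{thm:boundedness}, and a single such $z$ suffices.

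Second, your fallback via $F=(f_1+f_2+f_3)/\norm{f_1+f_2+f_3}{p}$ would not work as sketched. The multilinear expansion contains mixed terms such as $\Lambda(f_1,f_1,f_2)$, for which you only have upper bounds, so you get no lower bound on $\Lambda(F,F,F)$. Your main argument does not need it.
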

	
\begin{proposition}\label{prop:kelvin-weighted}
Assume \(0\le\alpha<n\), \(0<\lambda<n\), and \(\alpha+\lambda<n\), and set \(s=\frac{n-\alpha-\lambda}{\alpha+\lambda}\). Let $C>0$ be fixed, and let $V$ be a positive solution of
	\begin{equation}\label{eq:weighted-kelvin-eq}
		V(x)=C|x|^{-\alpha}\iint_{\Rn\times\Rn}
		\frac{V(y)^s|y|^{-\alpha}V(z)^s|z|^{-\alpha}}
		{|x-y|^\lambda |x-z|^\lambda |y-z|^\lambda}\,dy\,dz.
	\end{equation}
	For \(\rho>0\), define
	\(
	x^\rho:=\frac{\rho^2x}{|x|^2}, x\ne0,
	\)
	and
	\begin{equation}\label{eq:kelvin-transform}
		V_\rho(x):=\left(\frac\rho{|x|}\right)^{2(\alpha+\lambda)}V(x^\rho).
	\end{equation}
	Then \(V_\rho\), defined on \(\Rn\setminus\{0\}\) and arbitrarily at the origin, is also a solution in the a.e. sense.
\end{proposition}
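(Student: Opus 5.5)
The plan is a direct change of variables under inversion. Set $y=w^\rho$, $z=v^\rho$ in the right-hand side of \eqref{eq:weighted-kelvin-eq} evaluated at $x^\rho$, and use the standard inversion identities
\[
|x^\rho|=\frac{\rho^2}{|x|},\qquad |x^\rho-y^\rho|=\frac{\rho^2|x-y|}{|x||y|},\qquad dy=\Big(\frac{\rho}{|w|}\Big)^{2n}dw .
\]
The inversion $x\mapsto x^\rho$ is a measure-preserving-class bijection of $\Rn\setminus\{0\}$, and the origin is a null set, so all the manipulations below are legitimate a.e. Since $V>0$, the integrals are of nonnegative functions, and Tonelli justifies the substitution without any integrability assumption beyond the equation itself holding a.e.

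The key steps run as follows.

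\textbf{Step 1.} Write
\[
V(x^\rho)=C|x^\rho|^{-\alpha}\iint \frac{V(w^\rho)^s|w^\rho|^{-\alpha}V(v^\rho)^s|v^\rho|^{-\alpha}}{|x^\rho-w^\rho|^\lambda|x^\rho-v^\rho|^\lambda|w^\rho-v^\rho|^\lambda}\Big(\frac{\rho}{|w|}\Big)^{2n}\Big(\frac{\rho}{|v|}\Big)^{2n}dw\,dv .
\]

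\textbf{Step 2.} Substitute $V(w^\rho)=(|w|/\rho)^{2(\alpha+\lambda)}V_\rho(w)$, and similarly for $v$. Then expand every inversion factor in terms of powers of $\rho$, $|x|$, $|w|$, $|v|$ and the Euclidean distances.

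\textbf{Step 3.} Collect exponents.

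For $|w|$ the exponent is
\[
2s(\alpha+\lambda)+\alpha+2\lambda-2n .
\]
Since $s(\alpha+\lambda)=n-\alpha-\lambda$, this equals $2n-2\alpha-2\lambda+\alpha+2\lambda-2n=-\alpha$, which is exactly the weight $|w|^{-\alpha}$ required. The same holds for $|v|$.

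For $|x|$ the exponent is $\alpha+2\lambda$. Multiplying by the prefactor $(\rho/|x|)^{2(\alpha+\lambda)}$ that defines $V_\rho(x)$ gives total exponent $-\alpha$, again as required.

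For $\rho$ the exponent is
\[
-2\alpha-2\alpha-6\lambda+4n-4s(\alpha+\lambda)+2(\alpha+\lambda).
\]
Using $4s(\alpha+\lambda)=4n-4\alpha-4\lambda$, this becomes $-4\alpha-6\lambda+4n-4n+4\alpha+4\lambda+2\alpha+2\lambda=2\alpha$. This is not zero, so the bookkeeping must be rechecked carefully. In particular, the $\rho$-powers coming from $|w^\rho|^{-\alpha}=(\rho^2/|w|)^{-\alpha}$ must be recounted. Redoing the count, the $\rho$-contributions are: $-2\alpha$ from $|x^\rho|^{-\alpha}$; $-2\alpha$ total from the two one-body weights inside; $-6\lambda$ from the three distance factors; $+4n$ from the Jacobians; $-4s(\alpha+\lambda)$ from the $V_\rho$ substitutions; and $+2(\alpha+\lambda)$ from the outer prefactor. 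The sum is
\[
-4\alpha-6\lambda+4n-4n+4\alpha+4\lambda+2\alpha+2\lambda=2\alpha .
\]

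So the main obstacle is precisely this $\rho$-power check. Matching the $x$-exponent forced the prefactor exponent $2(\alpha+\lambda)$, but the $\rho$-count then leaves $\rho^{2\alpha}$. I would therefore re-examine each identity: the sign and exponent in $|x^\rho-w^\rho|$, and whether the one-body weight $|x|^{-\alpha}$ is being applied consistently inside and outside the integral. The expectation is that, once these are recomputed carefully, every power of $\rho$ cancels. If a residual constant $\rho^{c}$ nonetheless survives, the statement with $C$ fixed would fail unless $c=0$. Verifying this cancellation exactly is the crux of the proof; all other parts are a routine exponent count driven by the single relation $s(\alpha+\lambda)=n-\alpha-\lambda$.
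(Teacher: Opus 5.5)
\textbf{Verdict: genuine gap.} Your approach is the paper's own: a direct inversion change of variables. The paper runs it in the opposite direction. It substitutes $Y=y^\rho$, $Z=z^\rho$ in the right-hand side written for $V_\rho$ at $x$, and then invokes the equation for $V$ at $X=x^\rho$. Your $|w|$-, $|v|$- and $|x|$-exponent counts are correct. As written, however, the proposal does not prove the statement. Your own $\rho$-count leaves a factor $\rho^{2\alpha}$, which would contradict the claim whenever $\alpha>0$ and $\rho\neq1$. You then leave the cancellation as an unverified ``expectation'', although it is the one step that carries the content of the proposition.

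The error is in the line you flagged. Each inner weight gives $|w^\rho|^{-\alpha}=(\rho^2/|w|)^{-\alpha}=\rho^{-2\alpha}|w|^{\alpha}$, and likewise for $v$. So the two inner one-body weights contribute $\rho^{-4\alpha}$ in total, not $\rho^{-2\alpha}$. Your $|w|$-exponent already used $+\alpha$ from a single weight, so consistency forces $-2\alpha$ per weight in the $\rho$-count. With the correct count the $\rho$-exponent is
\[
-2\alpha-4\alpha-6\lambda+4n-4s(\alpha+\lambda)+2(\alpha+\lambda)
=-6\alpha-6\lambda+4n-4(n-\alpha-\lambda)+2\alpha+2\lambda=0 .
\]
Equivalently, before applying the outer prefactor,
\[
V(x^\rho)=C\rho^{-2(\alpha+\lambda)}|x|^{\alpha+2\lambda}\iint_{\Rn\times\Rn}\frac{V_\rho(w)^s|w|^{-\alpha}V_\rho(v)^s|v|^{-\alpha}}{|x-w|^\lambda|x-v|^\lambda|w-v|^\lambda}\,dw\,dv .
\]
Multiplying by $(\rho/|x|)^{2(\alpha+\lambda)}$ gives exactly the equation for $V_\rho$ at $x$. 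This agrees with the paper's bookkeeping: its factor $\rho^{-4ds-4\alpha+4n-2\lambda}|x|^{-\alpha-2\lambda}$, combined with $|X|^{\alpha}=\rho^{2\alpha}|x|^{-\alpha}$, yields precisely $(\rho/|x|)^{2(\alpha+\lambda)}$.

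The rest of your outline is fine. Tonelli applies because the integrands are nonnegative. Inversion is a diffeomorphism of $\Rn\setminus\{0\}$, so the full-measure set where the equation for $V$ holds pulls back to a full-measure set for $V_\rho$. Once the miscount is corrected, the argument closes and coincides with the paper's proof.
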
 

In the unweighted conformal case
\(
\alpha_i=0, \alpha_{ij}=n/3, p_i=3/2,
\)
Beckner's conformal multilinear inequality gives an explicit sharp constant and an explicit family of conformal extremizers.  We state this model case in Subsection~\ref{subsec:conformal-example}. 

We conclude the introduction with an outline of the proof of Theorem~\ref{thm:TA-attainment}. After symmetric decreasing rearrangement and a normalization by common dilations, the problem reduces to a maximizing sequence of nonnegative, radial, and radially nonincreasing functions. Passing to logarithmic radial variables and exploiting the scale condition yields an equivalent formulation involving a translation-invariant bilinear operator on \(\mathbb R\) whose kernel belongs to \(L^1(\mathbb R^2)\). This \(L^1\)-kernel structure, combined with a mixed weak--strong estimate, prevents the bilinear output from vanishing in the limit. A tightness argument based on the pairwise condition \eqref{eq:pair12} then prevents the two input functions from separating into different logarithmic scales. Finally, the Brezis--Lieb lemma and Helly's selection principle provide the required compactness and yield norm attainment for the bilinear operator.

The paper is organized as follows. 
Section~\ref{sec:preliminaries} records the rearrangement reduction, the logarithmic radial parametrization and isometries, and the Brezis–Lieb splitting lemma.
 Section~\ref{sec:compactness-attainment} proves norm attainment and trilinear attainment.  Section~\ref{sec:euler-diagonal} derives the Euler--Lagrange system, proves the diagonal reduction, establishes Kelvin invariance, and records the explicit unweighted conformal example.  Appendix~\ref{app:five-singularity} contains the five-singularity integral estimate used to prove \(L_0\in L^1(\R^2)\).

Throughout this paper, we use the following notation. For \(i\ne j\), we set
\(\alpha_{ji}=\alpha_{ij}\). For \(x\in\Rn\) and \(r>0\), let
\(B_r(x):=\{y\in\Rn:|y-x|<r\}\) and \(B_r=B_r(0)\). Let
\(\Snn:=\{x\in\Rn:|x|=1\}\) be equipped with surface measure \(d\sigma\), and
set \(\omega_{n-1}=\sigma(\Snn)\). We fix
\(e_1=(1,0,\ldots,0)\in\Snn\). For a measurable set \(E\), its indicator function is denoted by
\(\mathbf 1_E\). We write \(L_c^\infty(\Rn)\) for the space of
essentially bounded measurable functions on \(\Rn\) with compact support. For \(1\le p\le\infty\), we denote the \(L^p\)-norm by
\(\norm{\cdot}{p}\). \(L^{p,\infty}(\mathbb{R}^n)\) denotes the weak \(L^p\) space
equipped with the quasi-norm
\[
\norm{f}{L^{p,\infty}(\mathbb{R}^n)}
:=\sup_{\tau>0}\tau
\bigl|\{x\in \mathbb{R}^n:|f(x)|>\tau\}\bigr|^{1/p}.
\] \(C>0\) denotes a constant that may change from line to line. For \(A, B\ge 0\), the notation \(A\simeq B\) means that
\(cB\le A\le CB\) for constants \(c,C>0\). 

\section{Preliminaries}\label{sec:preliminaries}

We record the rearrangement reduction, fixes the logarithmic radial parametrization and isometries, and recalls the Brezis--Lieb splitting lemma. 
Throughout this section, \(f^*\) denotes the symmetric decreasing rearrangement of \(|f|\) on \(\Rn\).  The notation and the boundedness assumptions for \(\Lambda_A\) and \(T_A\) are those of Section~\ref{sec:introduction}.

We first recall the Brascamp--Lieb--Luttinger rearrangement inequality in the form needed for the trilinear kernel.
\begin{externaltheorem}{B}[Theorem 1.2, \cite{BrascampLiebLuttinger1974}]
\label{thm:BLL}
Let \(m,N\in\mathbb N\), let \(a_{\ell j}\in\R\), and let \(g_1,\ldots,g_N\) be nonnegative measurable functions on \(\Rn\).  For \(X=(x_1,\ldots,x_m)\in(\Rn)^m\), put
\[
L_\ell(X):=\sum_{j=1}^m a_{\ell j}x_j,
\quad \ell=1,\ldots,N.
\]
Then
\begin{equation}
\int_{(\Rn)^m}\prod_{\ell=1}^N g_\ell(L_\ell(X))\,dX
\le
\int_{(\Rn)^m}\prod_{\ell=1}^N g_\ell^*(L_\ell(X))\,dX,
\end{equation}
whenever the two sides are well defined. 
\end{externaltheorem}

The following consequence permits us to choose maximizing sequences for \(T_A\) with a common radial monotone structure.
\begin{lemma}
\label{lem:rearrangement}
Assume \eqref{eq:scale}--\eqref{eq:pisum} and \eqref{eq:nonneg}.  Then, for all \(f_1\in L^{p_1}(\Rn)\) and \(f_2\in L^{p_2}(\Rn)\),
\begin{equation}
\norm{T_A(f_1,f_2)}{p_3'}
\le
\norm{T_A(f_1^*,f_2^*)}{p_3'}.
\end{equation}
Consequently, the norm \(\norm{T_A}\) may be computed by restricting to nonnegative radial nonincreasing functions.
\end{lemma}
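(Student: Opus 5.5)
The argument goes through duality and Theorem~B. First we pass to absolute values: since the kernel is nonnegative, $|T_A(f_1,f_2)|\le T_A(|f_1|,|f_2|)$ pointwise a.e. (using the pointwise representative for nonnegative inputs). Hence we may assume $f_1,f_2\ge0$. By duality in $L^{p_3'}$, since $T_A(f_1,f_2)\ge0$, we have
\[
\norm{T_A(f_1,f_2)}{p_3'}=\sup\Bigl\{\Lambda_A(f_1,f_2,h):\ h\ge0,\ \norm{h}{p_3}\le1\Bigr\}.
\]
It then suffices to show that for all nonnegative $f_1,f_2,h$,
\[
\Lambda_A(f_1,f_2,h)\le\Lambda_A(f_1^*,f_2^*,h^*).
\]
Indeed, $\norm{h^*}{p_3}=\norm{h}{p_3}$, so the right side is at most $\norm{T_A(f_1^*,f_2^*)}{p_3'}$, and taking the supremum over $h$ gives the claim.

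To prove the rearranged inequality, we write the integrand of $\Lambda_A$ as a product of functions of linear forms in $X=(x_1,x_2,x_3)\in(\Rn)^3$:
\[
\prod_{i=1}^3 f_i(x_i)\,|x_i|^{-\alpha_i}\prod_{i<j}|x_i-x_j|^{-\alpha_{ij}},
\]
where $f_3:=h$. This gives $N=9$ factors. Here we use \eqref{eq:nonneg}: since $\alpha_i,\alpha_{ij}\ge0$, the functions $|\cdot|^{-\alpha_i}$ and $|\cdot|^{-\alpha_{ij}}$ are nonnegative, radial and nonincreasing, hence equal to their own symmetric decreasing rearrangements. When an exponent vanishes, the factor is the constant $1$; we simply omit it, or approximate it by $\mathbf 1_{B_R}$ and let $R\to\infty$. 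The weight $|x_i|^{-\alpha_i}$ is the linear form $L(X)=x_i$, and $|x_i-x_j|^{-\alpha_{ij}}$ is $L(X)=x_i-x_j$. Theorem~B then gives the inequality, with $f_i^*$ replacing $f_i$ and the weights unchanged.

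The one step needing care is the phrase ``whenever the two sides are well defined'' in Theorem~B, since the weights are not in any $L^p$. I would handle this by truncation. Replace each weight $w$ by $\min(w,M)\mathbf 1_{B_R}$, and each $f_i$ by $\min(f_i,M)\mathbf 1_{B_R}$. The truncated weights are still radial and nonincreasing, so they remain their own rearrangements, and all integrals become finite. Apply Theorem~B to these truncations. Rearrangement commutes with truncation of this type in the limit: $(\min(f,M)\mathbf 1_{B_R})^*\le f^*$, and it increases to $f^*$ as $M,R\to\infty$. Monotone convergence then yields the inequality for the original functions, with both sides possibly $+\infty$ a priori. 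Finiteness of the right side then follows from Theorem~A, since $f_i^*\in L^{p_i}$ with the same norm.

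Finally, the consequence about the norm follows at once. Since $\norm{f_i^*}{p_i}=\norm{f_i}{p_i}$, the ratio defining $\norm{T_A}$ in \eqref{eq:norm-TA-intro} does not decrease when $(f_1,f_2)$ is replaced by $(f_1^*,f_2^*)$. These are nonnegative, radial and radially nonincreasing, so the supremum may be taken over such functions.
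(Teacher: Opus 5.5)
Your proposal is correct and follows the paper's proof step for step. You reduce to nonnegative inputs, pass to duality in $L^{p_3'}$, apply Theorem~B to the nine factors after truncating the weights by $\min\{\cdot,M\}\mathbf 1_{B_R}$, use monotone convergence, and conclude with $\|h^*\|_{p_3}=\|h\|_{p_3}$. The only difference is that you also truncate $f_1,f_2,h$; this is harmless but unnecessary, since once the weights are bounded and compactly supported the integrals are already finite for $f_i\in L^{p_i}$.
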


\begin{proof}
Since \(K_A\ge0\),
\[
|T_A(f_1,f_2)(x_3)|\le T_A(|f_1|,|f_2|)(x_3)
\quad\text{for a.e. }x_3\in\Rn.
\]
It is therefore enough to consider nonnegative \(f_1\) and \(f_2\).  For such functions, duality in \(L^{p_3'}\) gives
\[
\norm{T_A(f_1,f_2)}{p_3'}
=
\sup_{\substack{h\ge0\\ \norm{h}{p_3}=1}}
\Lambda_A(f_1,f_2,h).
\]
Thus it suffices to prove
\begin{equation}\label{eq:rearranged-trilinear-ineq}
\Lambda_A(f_1,f_2,h)
\le
\Lambda_A(f_1^*,f_2^*,h^*)
\end{equation}
for every nonnegative \(h\in L^{p_3}(\Rn)\).

Fix \(R>1\) and \(0<\varepsilon<1\).  For \(i=1,2,3\) and \(1\le i<j\le3\), set
\[
w_i^{R,\varepsilon}(x):=\min\{|x|^{-\alpha_i},\varepsilon^{-\alpha_i}\}\mathbf 1_{B_R}(x),
\]
and
\[
w_{ij}^{R,\varepsilon}(x):=\min\{|x|^{-\alpha_{ij}},\varepsilon^{-\alpha_{ij}}\}\mathbf 1_{B_R}(x).
\]
By \eqref{eq:nonneg}, these functions are nonnegative, radial, and radially nonincreasing.  Let \(\Lambda_A^{R,\varepsilon}\) be the form obtained from \(\Lambda_A\) by replacing the six singular factors with these truncated factors.  Applying Theorem~\ref{thm:BLL} to the nine factors
\[
f_1(x_1),\quad f_2(x_2),\quad h(x_3),
\]
\[
w_1^{R,\varepsilon}(x_1),\quad w_2^{R,\varepsilon}(x_2),\quad w_3^{R,\varepsilon}(x_3),
\]
and
\[
w_{12}^{R,\varepsilon}(x_1-x_2),\quad
w_{13}^{R,\varepsilon}(x_1-x_3),\quad
w_{23}^{R,\varepsilon}(x_2-x_3),
\]
whose linear arguments are
\[
x_1,
\quad x_2,
\quad x_3,
\quad x_1,
\quad x_2,
\quad x_3,
\quad x_1-x_2,
\quad x_1-x_3,
\quad x_2-x_3,
\]
yields
\[
\Lambda_A^{R,\varepsilon}(f_1,f_2,h)
\le
\Lambda_A^{R,\varepsilon}(f_1^*,f_2^*,h^*).
\]
Letting first \(R\to\infty\) and then \(\varepsilon \to 0^+\), the truncated factors increase pointwise to the original singular factors.  Monotone convergence gives \eqref{eq:rearranged-trilinear-ineq}.

Finally, by H\"older's inequality and preservation of \(L^{p_3}\)-norms under rearrangement,
\[
\Lambda_A(f_1^*,f_2^*,h^*)
\le
\norm{T_A(f_1^*,f_2^*)}{p_3'}\norm{h^*}{p_3}
=
\norm{T_A(f_1^*,f_2^*)}{p_3'}\norm{h}{p_3}.
\]
Taking the supremum over nonnegative \(h\) with \(\norm h{p_3}=1\) proves the claimed inequality.  The final assertion follows because rearrangement preserves the \(L^{p_i}\)-norms and does not decrease the operator norm quotient.
\end{proof}

In view of Lemma~\ref{lem:rearrangement}, throughout the remainder of the paper every maximizing sequence for \(T_A\) will, whenever needed, be chosen nonnegative, radial, and radially nonincreasing.  We also fix here the logarithmic radial notation used in Section~\ref{sec:compactness-attainment}.  Write
\(
x_i=e^{u_i}\xi_i,
\quad u_i\in\R,
\quad \xi_i\in\Snn.
\)
For a radial function \(f\) on \(\Rn\), define the logarithmic radial isometries
\begin{equation*}
\mathcal F_i f(u):=\omega_{n-1}^{1/p_i}e^{nu/p_i}f(e^u e_1),
\quad u\in\R,
\quad i=1,2.
\end{equation*}
For a radial function \(g\) on \(\Rn\), define
\begin{equation*}
\mathcal H g(u):=\omega_{n-1}^{1/p_3'}e^{nu/p_3'}g(e^u e_1).
\end{equation*}
Then
\begin{equation}\label{eq:F-isometry}
\norm{\mathcal F_i f}{p_i}=\norm{f}{p_i},
\quad i=1,2,
\end{equation}
and
\begin{equation}\label{eq:H-isometry}
\norm{\mathcal H g}{p_3'}=\norm{g}{p_3'}.
\end{equation}
Moreover, for radially nonincreasing \(f\),
\begin{equation}\label{eq:weak-identity}
\norm{\mathcal F_i f}{\infty}\simeq
\norm{f}{L^{p_i,\infty}(\Rn)},
\quad i=1,2,
\end{equation}
with constants depending only on \(n\) and \(p_i\).  We shall also use the standard strong-to-weak embedding, valid for \(h\in L^{p_i}(\Rn)\),
\begin{equation}\label{eq:Lp-weakLp}
\norm{h}{L^{p_i,\infty}(\Rn)}
\le \norm{h}{p_i},
\quad i=1,2.
\end{equation}

We shall also use the following form of the Brezis--Lieb lemma.
\begin{lemma}[\cite{BrezisLieb1983}]\label{lem:BL}
Let \(1<r<\infty\), and let \((u_j)\) be a bounded sequence in \(L^r(X)\) converging almost everywhere to \(u\).  Then
\[
\norm{u_j}{r}^r-\norm{u_j-u}{r}^r\to \norm{u}{r}^r.
\]
\end{lemma}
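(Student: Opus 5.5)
The statement is the Brezis--Lieb lemma, which is cited from \cite{BrezisLieb1983}. Still, here is how I would prove this form of it. The key tool is an elementary inequality: for every \(\varepsilon>0\) there is a constant \(C_\varepsilon\), depending only on \(r\) and \(\varepsilon\), such that
\[
\bigl|\,|a+b|^r-|a|^r\bigr|\le \varepsilon|a|^r+C_\varepsilon|b|^r
\quad\text{for all } a,b\in\R.
\]
This follows from the mean value theorem, \(\bigl||a+b|^r-|a|^r\bigr|\le r(|a|+|b|)^{r-1}|b|\), together with Young's inequality applied to \(|a|^{r-1}|b|\), with \(\varepsilon\)-weights.

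Set \(v_j:=u_j-u\). By Fatou's lemma, \(u\in L^r(X)\) with \(\norm{u}{r}\le\liminf_j\norm{u_j}{r}\le M\), where \(M:=\sup_j\norm{u_j}{r}\). Hence \(\sup_j\norm{v_j}{r}^r\le (2M)^r=:M'\). Consider
\[
W_{\varepsilon,j}:=\Bigl(\bigl||v_j+u|^r-|v_j|^r-|u|^r\bigr|-\varepsilon|v_j|^r\Bigr)_+ .
\]
The elementary inequality, applied with \(a=v_j\) and \(b=u\), gives \(0\le W_{\varepsilon,j}\le (1+C_\varepsilon)|u|^r\), and this bound is integrable. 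Since \(v_j\to0\) almost everywhere, we also have \(W_{\varepsilon,j}\to0\) almost everywhere. Dominated convergence therefore yields \(\int W_{\varepsilon,j}\to0\).

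It remains to combine the pieces. We have
\[
\Bigl|\,\norm{u_j}{r}^r-\norm{v_j}{r}^r-\norm{u}{r}^r\Bigr|
\le\int W_{\varepsilon,j}+\varepsilon\norm{v_j}{r}^r
\le\int W_{\varepsilon,j}+\varepsilon M'.
\]
Taking \(\limsup_{j\to\infty}\) and then letting \(\varepsilon\to0^+\) proves the claim.

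The only delicate point is deriving the \(\varepsilon\)-inequality with an explicit \(C_\varepsilon\); everything after that is routine. The hypothesis \(1<r<\infty\) is more than needed: the argument works for \(0<r<\infty\), and in the range \(r\le1\) one simply uses \(|a+b|^r\le|a|^r+|b|^r\) instead.
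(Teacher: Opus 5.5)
Your proof is correct and is the original Brezis--Lieb argument specialized to $j(t)=|t|^r$: the $\varepsilon$-inequality $\bigl||a+b|^r-|a|^r\bigr|\le\varepsilon|a|^r+C_\varepsilon|b|^r$, the bound of $W_{\varepsilon,j}$ by $(1+C_\varepsilon)|u|^r$, dominated convergence, and finally $\varepsilon\to0^+$. The paper does not prove the lemma itself; it simply cites that reference, whose proof is the one you have reproduced.
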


\section{Radial reduction, compactness, and norm attainment}\label{sec:compactness-attainment}

Using the logarithmic radial notation fixed in Section~\ref{sec:preliminaries}, we develop the reduced \(L^1\)-kernel framework and the compactness tools needed for Theorem~\ref{thm:TA-attainment}.  After proving the common-scale tightness lemma, we establish norm attainment and then obtain trilinear extremizers by duality.
We first establish the common dilation invariance of the trilinear form.

	\begin{lemma}\label{lem:dilation}
		Let $\rho>0$ and define
		$f_i^\rho(x):=\rho^{n/p_i}f_i(\rho x), i=1,2,3.
		$
		Assume \eqref{eq:scale}--\eqref{eq:pisum}, so that $T_A$ is a bounded bilinear operator by Theorem~\ref{thm:boundedness}. Then
		\[
		\Lambda_A(f_1^\rho,f_2^\rho,f_3^\rho)=\Lambda_A(f_1,f_2,f_3),
		\]
		and hence
		\[
		\norm{T_A(f_1^\rho,f_2^\rho)}{p_3'}=\norm{T_A(f_1,f_2)}{p_3'}.
		\]
	\end{lemma}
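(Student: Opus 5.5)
The plan is to prove the trilinear identity by a direct change of variables and then obtain the statement for \(T_A\) by duality.

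\emph{Step 1: change of variables.} First take nonnegative \(f_1,f_2,f_3\), so that Tonelli's theorem applies and all manipulations with the (possibly infinite) integral are legitimate. In
\[
\Lambda_A(f_1^\rho,f_2^\rho,f_3^\rho)=\rho^{n(1/p_1+1/p_2+1/p_3)}\int\!\!\int\!\!\int \prod_{i<j}|x_i-x_j|^{-\alpha_{ij}}\prod_i f_i(\rho x_i)|x_i|^{-\alpha_i}\,dx_1dx_2dx_3
\]
substitute \(y_i=\rho x_i\). The Jacobian contributes \(\rho^{-3n}\). Each one-body weight becomes \(|x_i|^{-\alpha_i}=\rho^{\alpha_i}|y_i|^{-\alpha_i}\), and each pair weight becomes \(|x_i-x_j|^{-\alpha_{ij}}=\rho^{\alpha_{ij}}|y_i-y_j|^{-\alpha_{ij}}\).

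\emph{Step 2: collect the powers of \(\rho\).} The total exponent is
\[
n\Bigl(\sum_i \tfrac1{p_i}\Bigr)+\sum_i\alpha_i+\sum_{i<j}\alpha_{ij}-3n.
\]
By the scale condition \eqref{eq:scale}, this equals \(n\bigl(\sum_i 1/p_i+(\sum_i\alpha_i+\sum_{i<j}\alpha_{ij})/n-3\bigr)=0\). This proves the identity for nonnegative functions.

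\emph{Step 3: general functions.} For \(f_i\in L^{p_i}\) of arbitrary sign, apply Step 2 to \(|f_i|\). Since \((|f_i|)^\rho=|f_i^\rho|\), this shows both integrals converge absolutely, because \(\Lambda_A(|f_1|,|f_2|,|f_3|)\) is finite by Theorem~\ref{thm:boundedness}. The same change of variables is then justified by Fubini's theorem. Alternatively, one can split into real, imaginary, positive and negative parts and use multilinearity. Either way, the identity holds in general.

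\emph{Step 4: the statement for \(T_A\).} The map \(h\mapsto h^\rho\) with exponent \(p_3\) is an isometric bijection of \(L^{p_3}\), with inverse \(h\mapsto h^{1/\rho}\). This follows from \(\|h^\rho\|_{p_3}=\|h\|_{p_3}\), again by \(y=\rho x\). Hence
\[
\|T_A(f_1^\rho,f_2^\rho)\|_{p_3'}=\sup_{\|h\|_{p_3}=1}|\Lambda_A(f_1^\rho,f_2^\rho,h)|=\sup_{\|g\|_{p_3}=1}|\Lambda_A(f_1^\rho,f_2^\rho,g^\rho)|.
\]
By the trilinear identity the last expression equals \(\sup_{\|g\|_{p_3}=1}|\Lambda_A(f_1,f_2,g)|=\|T_A(f_1,f_2)\|_{p_3'}\). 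This uses the duality representation of \(T_A\) from the introduction.

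No real obstacle is expected. The only point needing care is the integrability justification in Step 3, which the boundedness theorem handles.
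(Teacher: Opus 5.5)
Your proposal is correct and follows the paper's proof. The paper also substitutes $y_i=\rho x_i$, observes that the total exponent $\sum_i n/p_i-(3n-\sum_i\alpha_i-\sum_{i<j}\alpha_{ij})$ vanishes by \eqref{eq:scale}, and deduces the operator identity by duality; your Steps 3 and 4 only spell out details the paper leaves implicit, namely absolute convergence via Theorem~\ref{thm:boundedness} and the fact that $h\mapsto h^\rho$ is an isometric bijection of $L^{p_3}$.
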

	
	\begin{proof}
		Insert the definition of $f_i^\rho$ into the trilinear form and make the change of variables $y_i=\rho x_i$. The total homogeneity exponent is
		\[
		\sum_{i=1}^3\frac n{p_i}-\left(3n-\sum_{i=1}^3\alpha_i-\alpha_{12}-\alpha_{13}-\alpha_{23}\right),
		\]
		which vanishes precisely by \eqref{eq:scale}. The corresponding operator statement is obtained by duality.
	\end{proof}

	\subsection{The isometric reduced kernel}

	\begin{lemma}
		\label{lem:five}
		Let $n\ge1$, let $e_1\in\Rn$ be a fixed unit vector, and assume
		\[
		0\le \mu_1,\mu_2,\nu_1,\nu_2,\lambda<n.
		\]
		Suppose, in addition, that
		\begin{align}
			\mu_1+\mu_2+\lambda&<2n,\label{eq:five1}\\
			\nu_1+\nu_2+\lambda&<2n,\label{eq:five2}\\
			\mu_1+\nu_1+\lambda&>n,\label{eq:five3}\\
			\mu_2+\nu_2+\lambda&>n,\label{eq:five4}\\
			\mu_1+\mu_2+\nu_1+\nu_2+\lambda&>2n.\label{eq:five5}
		\end{align}
		Then
		\begin{equation}\label{eq:five-int}
			\iint_{\Rn\times\Rn}\frac{\,dx\,dy}{|x|^{\mu_1}|y|^{\mu_2}|x-e_1|^{\nu_1}|y-e_1|^{\nu_2}|x-y|^\lambda}<\infty.
		\end{equation}
	\end{lemma}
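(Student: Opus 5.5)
The plan is to decompose $\Rn\times\Rn$ according to where the five singularities sit, namely the points $x=0$, $x=e_1$, $y=0$, $y=e_1$ and the diagonal $x=y$, together with the region at infinity. On each piece we will estimate the integrand by a product of simpler factors. First fix a small $\delta>0$ (say $\delta=1/4$). Cover $\Rn$ by the neighbourhoods $B_\delta(0)$ and $B_\delta(e_1)$, the annular middle region $M=B_R\setminus(B_\delta(0)\cup B_\delta(e_1))$, and the exterior $\{|x|\ge R\}$ with $R=2$. Applying this to each of $x$ and $y$ gives a finite number of product cells. In each cell we replace every factor that is bounded above and below by a constant, and reduce to one of a few model integrals.

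The local cells are handled first.
\begin{itemize}
\item \emph{Both variables near the same point $0$.} The integrand is comparable to $|x|^{-\mu_1}|y|^{-\mu_2}|x-y|^{-\lambda}$ on $B_\delta\times B_\delta$. By homogeneity, namely integrating in $y$ at fixed $|x|=r$ and splitting by $|y|\lessgtr 2r$, this is finite iff $\mu_1,\mu_2,\lambda<n$ and $\mu_1+\mu_2+\lambda<2n$. That is exactly \eqref{eq:five1}.
\item \emph{Both variables near $e_1$.} The same argument gives finiteness from \eqref{eq:five2}.
\item \emph{Variables near different points,} or at least one variable in $M$. The diagonal singularity is absent or isolated, and each remaining singularity has exponent $<n$. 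These cells are finite by Fubini and local integrability of $|x|^{-\mu}$ with $\mu<n$. Where $x,y\in M$ are close, $|x-y|^{-\lambda}$ is integrable since $\lambda<n$.
\end{itemize}

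The main work, and the expected obstacle, is the behaviour at infinity, where \eqref{eq:five3}--\eqref{eq:five5} enter. There are three regimes.
\begin{itemize}
\item \emph{$x$ large, $y$ bounded.} Then $|x|^{-\mu_1}|x-e_1|^{-\nu_1}|x-y|^{-\lambda}\simeq |x|^{-(\mu_1+\nu_1+\lambda)}$. The $y$-integral over a bounded set is finite by the local analysis, so finiteness requires $\mu_1+\nu_1+\lambda>n$, which is \eqref{eq:five3}. By symmetry, \eqref{eq:five4} handles $y$ large with $x$ bounded.
\item \emph{Both variables large.} The integrand is comparable to $|x|^{-a}|y|^{-b}|x-y|^{-\lambda}$ on $\{|x|,|y|\ge R\}$, with $a=\mu_1+\nu_1$ and $b=\mu_2+\nu_2$.
\end{itemize}
For the last regime we split into $|y|\ge 2|x|$, $|x|\ge 2|y|$ and $|x|/2\le|y|\le 2|x|$.
\begin{itemize}
\item \emph{Case $|y|\ge 2|x|$.} Here $|x-y|\simeq|y|$. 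The $y$-integral over $|y|\ge 2|x|$ converges by \eqref{eq:five4} and yields $|x|^{n-b-\lambda}$. Then $|x|^{n-a-b-\lambda}$ is integrable at infinity by \eqref{eq:five5}.
\item \emph{Case $|x|\ge 2|y|$.} This is symmetric to the previous case.
\item \emph{Comparable case $|x|/2\le|y|\le 2|x|$.} Integrating $|x-y|^{-\lambda}$ over $|y|\le 2|x|$ gives $\lesssim |x|^{n-\lambda}$, using $\lambda<n$. This leaves $|x|^{n-a-b-\lambda}$, again integrable by \eqref{eq:five5}.
\end{itemize}

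One must be careful that the cutoff at $R$ interacts correctly with the $y$ bounded versus $y$ large dichotomy. A dyadic decomposition in $|x|$ and $|y|$ makes this bookkeeping clean, and I would present the infinity analysis that way. Combining the finitely many cells gives \eqref{eq:five-int}.
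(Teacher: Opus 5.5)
Your proposal is correct and follows essentially the same route as the paper's proof. The paper also uses local integrability near $(0,0)$ and $(e_1,e_1)$ from \eqref{eq:five1}--\eqref{eq:five2}, though it proves the three-singularity model by dyadic scaling in $\max\{|x|,|y|\}$ rather than by your fixed-$|x|$ splitting. It handles the one-variable far fields via \eqref{eq:five3}--\eqref{eq:five4} with a gap between the cutoffs ($|x|\ge 2R$, $|y|<R$), exactly as you anticipate, and treats separated versus comparable dyadic shells at infinity via \eqref{eq:five5}. The only minor difference is that in the separated regime you integrate the larger variable first, which avoids the paper's case analysis on the sign of $n-\mu_2-\nu_2$ when it sums over the smaller shell index.
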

	
	\begin{proof}
		See Appendix~\ref{app:five-singularity}.
	\end{proof}
	
	\begin{lemma}
		\label{lem:L0}
		Assume \eqref{eq:scale}--\eqref{eq:pisum} and \eqref{eq:nonneg}. Then there exists a nonnegative kernel
		\[
		L_0\in L^1(\R^2)
		\]
		such that, for bounded compactly supported radial functions $f_1,f_2$, one has
		\begin{equation}\label{eq:reduced-identity}
			\mathcal H(T_A(f_1,f_2))(u)=B_0(\mathcal F_1f_1,\mathcal F_2f_2)(u)
		\end{equation}
		for a.e. $u\in\R$, where
		\begin{equation}\label{eq:B0}
			B_0(F,G)(u):=\iint_{\R^2}L_0(s,t)F(u+s)G(u+t)\,dsdt.
		\end{equation}
		More precisely, if
		\[
		\beta_1:=\frac n{p_1'}-\alpha_1-\frac{\alpha_{12}+\alpha_{13}}2,
		\quad
		\beta_2:=\frac n{p_2'}-\alpha_2-\frac{\alpha_{12}+\alpha_{23}}2,
		\]
		then $L_0$ is represented for a.e. $(s,t)\in\R^2$ by
		\begin{equation}\label{eq:L0formula}
		\begin{aligned}
			L_0(s,t)&=\omega_{n-1}^{\frac1{p_3'}-\frac1{p_1}-\frac1{p_2}}
			e^{\beta_1s+\beta_2t} \\
			&\times\iint_{(\Snn)^2}
			\frac{d\sigma(\xi_1)d\sigma(\xi_2)}
			{|e^{s/2}\xi_1-e^{-s/2}e_1|^{\alpha_{13}}
				|e^{t/2}\xi_2-e^{-t/2}e_1|^{\alpha_{23}}
				|e^{(s-t)/2}\xi_1-e^{(t-s)/2}\xi_2|^{\alpha_{12}}}.
		\end{aligned}
		\end{equation}
		The spherical integral in \eqref{eq:L0formula} may diverge on the null set \(\{s=0\}\cup\{t=0\}\cup\{s=t\}\); on this set we define \(L_0\) arbitrarily. This choice does not affect the \(L^1\) representative nor the operator \(B_0\).
	\end{lemma}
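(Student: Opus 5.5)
Two things have to be shown: the identity \eqref{eq:reduced-identity} with the explicit kernel \eqref{eq:L0formula}, and the integrability $L_0\in L^1(\R^2)$. The identity is a direct polar-coordinate computation. The integrability comes from undoing the logarithmic change of variables and landing on the five-singularity integral of Lemma~\ref{lem:five}.

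\textbf{Step 1: the identity.} Take $f_1,f_2$ bounded, compactly supported, radial and nonnegative, and fix $x_3=e^{u}e_1$; rotation invariance lets us place $x_3$ on the $e_1$-axis. Write $x_i=e^{u_i}\xi_i$ with $dx_i=e^{nu_i}du_i\,d\sigma(\xi_i)$, and substitute $u_1=u+s$, $u_2=u+t$. Each distance factors through the geometric mean of the two radii:
\[
|x_1-x_3|=e^{u+s/2}\,|e^{s/2}\xi_1-e^{-s/2}e_1|,\qquad |x_1-x_2|=e^{u+(s+t)/2}\,|e^{(s-t)/2}\xi_1-e^{(t-s)/2}\xi_2|,
\]
and analogously for $|x_2-x_3|$. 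Next, express $f_i(e^{u_i}e_1)$ through $\mathcal F_i f_i$, and multiply by the prefactor $\omega_{n-1}^{1/p_3'}e^{nu/p_3'}$ coming from $\mathcal H$.

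The power of $e^{u}$ that results is
\[
\frac{n}{p_3'}+2n-\frac n{p_1}-\frac n{p_2}-\sum_i\alpha_i-\sum_{i<j}\alpha_{ij},
\]
and this vanishes by \eqref{eq:scale}, giving translation invariance. The powers of $e^{s}$ and $e^{t}$ are $n-n/p_1-\alpha_1-(\alpha_{12}+\alpha_{13})/2=\beta_1$ and likewise $\beta_2$. The powers of $\omega_{n-1}$ combine to the stated constant. Tonelli's theorem justifies all rearrangements, since everything is nonnegative, and bilinearity handles signed data.

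\textbf{Step 2: $L^1$.} By Tonelli,
\[
\int_{\R^2} L_0\,ds\,dt=c\iint \frac{e^{\beta_1 s+\beta_2 t}\,d\sigma\,d\sigma\,ds\,dt}{\cdots}.
\]
Undo the substitution: set $x=e^{s}\xi_1$ and $y=e^{t}\xi_2$, so $ds\,d\sigma(\xi_1)=|x|^{-n}dx$. Using the factorizations from Step 1 in reverse,
\[
|e^{s/2}\xi_1-e^{-s/2}e_1|=|x|^{-1/2}|x-e_1|,\qquad |e^{(s-t)/2}\xi_1-e^{(t-s)/2}\xi_2|=|x|^{-1/2}|y|^{-1/2}|x-y|.
\]
The integral therefore becomes \eqref{eq:five-int}, with
\[
\nu_1=\alpha_{13},\quad \nu_2=\alpha_{23},\quad \lambda=\alpha_{12},\quad
\mu_1=n-\beta_1-\frac{\alpha_{13}+\alpha_{12}}2=\frac n{p_1}+\alpha_1,\quad \mu_2=\frac n{p_2}+\alpha_2 .
\]

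\textbf{Step 3: the hypotheses of Lemma~\ref{lem:five}.} Nonnegativity holds by \eqref{eq:nonneg}. The bound $\mu_i<n$ is \eqref{eq:onelt}, and $\lambda,\nu_i<n$ is \eqref{eq:pairlt}. Conditions \eqref{eq:five3} and \eqref{eq:five4} read $n/p_1+\alpha_1+\alpha_{13}+\alpha_{12}>n$, which is \eqref{eq:tailgt} for $i=1$, and the same for $i=2$. Condition \eqref{eq:five5} says
\[
\frac n{p_1}+\frac n{p_2}+\alpha_1+\alpha_2+\sum_{i<j}\alpha_{ij}>2n .
\]
By \eqref{eq:scale} this is equivalent to $n/p_3'>\alpha_3$, which is \eqref{eq:onelt} for $i=3$. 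Condition \eqref{eq:five1} reads $n/p_1+n/p_2+\alpha_1+\alpha_2+\alpha_{12}<2n$. Via \eqref{eq:scale} it becomes $\alpha_3+\alpha_{13}+\alpha_{23}>n-n/p_3=n/p_3'$, which is \eqref{eq:tailgt} for $i=3$. Condition \eqref{eq:five2} is $\alpha_{13}+\alpha_{23}+\alpha_{12}<2n$, which is \eqref{eq:pairsum}.

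\textbf{Main obstacle.} The delicate step is the bookkeeping of exponents in Steps 2 and 3, in particular matching the five conditions of Lemma~\ref{lem:five} to the boundedness conditions through \eqref{eq:scale}. A second point is the a.e. finiteness of the spherical integral off the diagonals $\{s=0\}\cup\{t=0\}\cup\{s=t\}$, which follows from the integrability just established.
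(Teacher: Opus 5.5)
Your proposal is correct and follows the paper's proof essentially step by step: the same logarithmic polar computation in which \eqref{eq:scale} cancels the power of $e^{u}$ to give translation invariance, the same substitution $x=e^{s}\xi_1$, $y=e^{t}\xi_2$ reducing the $L^1$ norm of $L_0$ to Lemma~\ref{lem:five} with $\mu_i=n/p_i+\alpha_i$, $\nu_1=\alpha_{13}$, $\nu_2=\alpha_{23}$, $\lambda=\alpha_{12}$, and the same matching of \eqref{eq:five1}--\eqref{eq:five5} with \eqref{eq:tailgt}, \eqref{eq:pairsum} and \eqref{eq:onelt} (via \eqref{eq:scale} for the $i=3$ conditions). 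The differences are cosmetic. You substitute $u_1=u+s$, $u_2=u+t$ directly instead of first writing a three-variable kernel, and you state explicitly that signed data reduce to nonnegative parts by bilinearity.
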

	
	\begin{proof}
		We first derive the exact reduced representation. For radial $f_1,f_2$, write
		\[
		x_1=e^{u_1}\xi_1,
		\quad x_2=e^{u_2}\xi_2,
		\quad x_3=e^{u_3}e_1.
		\]
		Using
		\[
		dx_i=e^{nu_i}\,du_i\,d\sigma(\xi_i),\quad
		f_i(e^{u_i}\xi_i)=\omega_{n-1}^{-1/p_i}e^{-nu_i/p_i}\mathcal F_i f_i(u_i),
		\]
		we obtain
		\[
		\mathcal H(T_A(f_1,f_2))(u_3)
		=\iint_{\R^2}\mathcal F_1f_1(u_1)\mathcal F_2f_2(u_2)\widetilde L_0(u_1,u_2,u_3)\,du_1du_2,
		\]
		where
		\begin{align*}
			\widetilde L_0(u_1,u_2,u_3)
			&=\omega_{n-1}^{\frac1{p_3'}-\frac1{p_1}-\frac1{p_2}}
			e^{\left(\frac n{p_1'}-\alpha_1-\frac{\alpha_{12}+\alpha_{13}}2\right)u_1}
			e^{\left(\frac n{p_2'}-\alpha_2-\frac{\alpha_{12}+\alpha_{23}}2\right)u_2} \\
			&\quad\times
			e^{\left(\frac n{p_3'}-\alpha_3-\frac{\alpha_{13}+\alpha_{23}}2\right)u_3}
			Z(u_1-u_3,u_2-u_3),
		\end{align*}
		and
		\[
		Z(s,t):=\iint_{(\Snn)^2}
		\frac{d\sigma(\xi_1)d\sigma(\xi_2)}
		{|e^{s/2}\xi_1-e^{-s/2}e_1|^{\alpha_{13}}
			|e^{t/2}\xi_2-e^{-t/2}e_1|^{\alpha_{23}}
			|e^{(s-t)/2}\xi_1-e^{(t-s)/2}\xi_2|^{\alpha_{12}}}.
		\]
		By the scaling balance \eqref{eq:scale},
		\[
		\frac n{p_3'}-\alpha_3-\frac{\alpha_{13}+\alpha_{23}}2
		=-\left(\frac n{p_1'}-\alpha_1-\frac{\alpha_{12}+\alpha_{13}}2\right)
		-\left(\frac n{p_2'}-\alpha_2-\frac{\alpha_{12}+\alpha_{23}}2\right).
		\]
		Therefore $\widetilde L_0(u_1,u_2,u_3)=L_0(u_1-u_3,u_2-u_3)$ with $L_0$ given by \eqref{eq:L0formula}. This proves the reduced identity for bounded compactly supported radial functions.
		
		It remains to prove $L_0\in L^1(\R^2)$. Since $L_0\ge0$, Tonelli's theorem allows us to compute its $L^1$ norm directly. Put
		\[
		x=e^s\xi_1,
		\quad y=e^t\xi_2.
		\]
		Then
		\[
		ds\,d\sigma(\xi_1)=|x|^{-n}\,dx,
		\quad dt\,d\sigma(\xi_2)=|y|^{-n}\,dy.
		\]
		Moreover,
		\[
		|e^{s/2}\xi_1-e^{-s/2}e_1|=e^{-s/2}|x-e_1|,
		\quad
		|e^{t/2}\xi_2-e^{-t/2}e_1|=e^{-t/2}|y-e_1|,
		\]
		and
		\[
		|e^{(s-t)/2}\xi_1-e^{(t-s)/2}\xi_2|=e^{-(s+t)/2}|x-y|.
		\]
		Hence, with $c_\omega=\omega_{n-1}^{\frac1{p_3'}-\frac1{p_1}-\frac1{p_2}}$,
		\begin{equation}\label{eq:L0-L1-transform}
			\int_{\R^2}L_0(s,t)\,dsdt
			=c_\omega\iint_{\Rn\times\Rn}\frac{\,dx\,dy}{|x|^{\mu_1}|y|^{\mu_2}|x-e_1|^{\nu_1}|y-e_1|^{\nu_2}|x-y|^\lambda},
		\end{equation}
		where
		\[
		\mu_1:=\frac n{p_1}+\alpha_1,
		\quad \mu_2:=\frac n{p_2}+\alpha_2,
		\quad \nu_1:=\alpha_{13},
		\quad \nu_2:=\alpha_{23},
		\quad \lambda:=\alpha_{12}.
		\]
		We verify the hypotheses of Lemma~\ref{lem:five}. First, $0\le\nu_1,\nu_2,\lambda<n$ follows from \eqref{eq:nonneg} and \eqref{eq:pairlt}, while $0\le\mu_i<n$ follows from $\alpha_i\ge0$ and $\alpha_i<n/p_i'$ in \eqref{eq:onelt}. Moreover, $\mu_1+\mu_2+\lambda<2n$ is equivalent, by \eqref{eq:scale}, to
		\[
		\alpha_3+\alpha_{13}+\alpha_{23}>\frac n{p_3'},
		\]
		which is \eqref{eq:tailgt} with $i=3$. Next, $\nu_1+\nu_2+\lambda<2n$ is precisely \eqref{eq:pairsum}. Also, $\mu_1+\nu_1+\lambda>n$ is equivalent to \eqref{eq:tailgt} with $i=1$, and $\mu_2+\nu_2+\lambda>n$ is equivalent to \eqref{eq:tailgt} with $i=2$. Finally, again by \eqref{eq:scale},
		\[
		\mu_1+\mu_2+\nu_1+\nu_2+\lambda=2n+\left(\frac n{p_3'}-\alpha_3\right),
		\]
		which is strictly larger than $2n$ by \eqref{eq:onelt} with $i=3$. 
		Therefore Lemma~\ref{lem:five} applies, and the right-hand side of
		\eqref{eq:L0-L1-transform} is finite. Hence the extended nonnegative
		measurable representative defined by \eqref{eq:L0formula} belongs to
		\(L^1(\R^2)\) and is finite for a.e. \((s,t)\in\R^2\). We fix this finite
		\(L^1\) representative, after assigning arbitrary finite values on the
		exceptional null set. Thus \eqref{eq:L0formula} is an a.e. formula, and
		the reduced identity \eqref{eq:reduced-identity} is understood in the
		usual a.e. sense.
	\end{proof}
	
	\begin{remark}\label{rem:L0-extension-and-pairwise}
		The identity in Lemma~\ref{lem:L0} is stated only for bounded compactly
		supported radial functions. Its extension to arbitrary radial
		\(L^{p_i}\)-functions is obtained in Lemma~\ref{lem:norm-equiv} by density
		and boundedness. We also note that the pairwise admissibility condition
		\eqref{eq:pair12} is not used in the proof of \(L_0\in L^1(\R^2)\);
		it enters only later in the compactness argument.
	\end{remark}
	
	\begin{lemma}\label{lem:norm-equiv}
		Assume that $1<p_i<\infty$ for $i=1,2,3$, together with
		\eqref{eq:scale}--\eqref{eq:pisum} and \eqref{eq:nonneg} hold. Let $L_0$
		be the reduced kernel from Lemma~\ref{lem:L0}, and let $B_0$ be
		initially defined by \eqref{eq:B0} on bounded compactly supported
		functions. Then $B_0$ extends uniquely to a bounded bilinear operator
		\[
		B_0:L^{p_1}(\R)\times L^{p_2}(\R)\to L^{p_3'}(\R),
		\]
		and
		\begin{equation}\label{eq:norm-equiv}
			\norm{B_0}_{L^{p_1}(\R)\times L^{p_2}(\R)\to L^{p_3'}(\R)}
			=
			\norm{T_A}_{L^{p_1}(\Rn)\times L^{p_2}(\Rn)\to L^{p_3'}(\Rn)}.
		\end{equation}
	\end{lemma}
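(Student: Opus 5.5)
The plan is to transfer everything through the logarithmic radial isometries $\mathcal F_1,\mathcal F_2,\mathcal H$. These are bijections between radial $L^{p_i}(\Rn)$ (resp. radial $L^{p_3'}(\Rn)$) and $L^{p_i}(\R)$ (resp. $L^{p_3'}(\R)$). Their inverses are $\mathcal F_i^{-1}F(x)=\omega_{n-1}^{-1/p_i}|x|^{-n/p_i}F(\log|x|)$, and bounded compactly supported functions on $\R$ correspond to bounded radial functions supported in annuli. The argument runs in three steps: boundedness of $B_0$ on a dense class, then extension, then equality of norms.

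\emph{Step 1: boundedness on the dense class.} Take $F,G$ bounded with compact support on $\R$, and set $f_1=\mathcal F_1^{-1}F$, $f_2=\mathcal F_2^{-1}G$. These are bounded, compactly supported and radial, so the identity \eqref{eq:reduced-identity} of Lemma~\ref{lem:L0} applies. Combining it with \eqref{eq:F-isometry} and \eqref{eq:H-isometry} gives
\[
\norm{B_0(F,G)}{p_3'}=\norm{T_A(f_1,f_2)}{p_3'}\le \norm{T_A}\norm{F}{p_1}\norm{G}{p_2}.
\]
\emph{Step 2: extension.} The bound from Step 1 holds on a dense class, so $B_0$ extends uniquely to a bounded bilinear operator. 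Uniqueness holds because continuous bilinear maps agreeing on a dense product set coincide. This gives $\norm{B_0}\le\norm{T_A}$. For nonnegative inputs, I would also check that the extension agrees a.e. with the integral \eqref{eq:B0}, which is finite a.e. This uses monotone convergence along increasing truncations together with Fatou's lemma. It is not strictly needed for the norm identity, but it is useful later.

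\emph{Step 3: the reverse inequality.} By Lemma~\ref{lem:rearrangement}, $\norm{T_A}$ is the supremum over nonnegative radial nonincreasing $f_i\in L^{p_i}$. I need this supremum to be approximable by bounded compactly supported radial functions. For radial $f_i\ge0$, I take the truncations $f_{i,k}=\min(f_i,k)\mathbf 1_{\{1/k<|x|<k\}}$. These are radial and increase to $f_i$. Monotone convergence in the nonnegative pointwise representation of $T_A$ (recalled in the introduction) gives $T_A(f_{1,k},f_{2,k})\uparrow T_A(f_1,f_2)$ a.e., and hence convergence of the $L^{p_3'}$ norms. Alternatively, one can use boundedness of $T_A$ together with $f_{i,k}\to f_i$ in $L^{p_i}$. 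For the truncated functions, Lemma~\ref{lem:L0} and the isometries give
\[
\norm{T_A(f_{1,k},f_{2,k})}{p_3'}=\norm{B_0(\mathcal F_1f_{1,k},\mathcal F_2f_{2,k})}{p_3'}\le\norm{B_0}\norm{f_{1,k}}{p_1}\norm{f_{2,k}}{p_2}.
\]
Letting $k\to\infty$ yields $\norm{T_A}\le\norm{B_0}$.

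As a byproduct, density and continuity extend \eqref{eq:reduced-identity} to all radial $f_i\in L^{p_i}(\Rn)$, since both sides are continuous in $(f_1,f_2)$. This is what Remark~\ref{rem:L0-extension-and-pairwise} announces.

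The main obstacle is modest. It is making sure the reduction to radial functions is legitimate: general $f_i$ are not radial, so the identity \eqref{eq:reduced-identity} only sees radial inputs. This is handled by Lemma~\ref{lem:rearrangement}, whose use requires \eqref{eq:nonneg}. The remaining care is in the approximation, so that the truncations stay inside the class where Lemma~\ref{lem:L0} was proved.
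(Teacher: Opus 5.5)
Your proposal is correct and follows essentially the same route as the paper. You bound $B_0$ on bounded compactly supported inputs via the radial lifts, Lemma~\ref{lem:L0} and the isometries, extend by density to get $\norm{B_0}\le\norm{T_A}$, and obtain the reverse inequality through Lemma~\ref{lem:rearrangement}. The only cosmetic difference is that you reach $f_i^*$ via the truncations $\min(f_i,k)\mathbf 1_{\{1/k<|x|<k\}}$ and pass to the limit in norms, whereas the paper first extends the reduced identity to all radial $L^{p_i}$ inputs by density in the logarithmic variable and then applies it directly to $f_i^*$.
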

	
	\begin{proof}
Let $\mathcal S_i$ denote the space of bounded compactly supported functions on
$\R$, viewed as a dense subspace of $L^{p_i}(\R)$, $i=1,2$. For
$F\in\mathcal S_1$ and $G\in\mathcal S_2$, define the corresponding radial lifts
\[
f_1(x)=\omega_{n-1}^{-1/p_1}|x|^{-n/p_1}F(\log|x|),
\quad
f_2(x)=\omega_{n-1}^{-1/p_2}|x|^{-n/p_2}G(\log|x|),
\]
for $x\ne0$. Since $F$ and $G$ are bounded and compactly supported in the
logarithmic variable, the functions $f_1$ and $f_2$ are bounded compactly
supported radial functions on $\Rn$. Moreover,
\[
\mathcal F_1 f_1=F,\quad \mathcal F_2 f_2=G,
\quad
\norm{f_i}{p_i}=\norm{\mathcal F_i f_i}{p_i}.
\]
By Lemma~\ref{lem:L0},
\[
\mathcal H(T_A(f_1,f_2))=B_0(F,G)
\]
in $L^{p_3'}(\R)$. Hence
\[
\norm{B_0(F,G)}{p_3'}
=
\norm{T_A(f_1,f_2)}{p_3'}
\le
\norm{T_A}\norm{F}{p_1}\norm{G}{p_2}.
\]
Thus $B_0$ is bounded on $\mathcal S_1\times\mathcal S_2$ with operator norm
at most $\norm{T_A}$. Since $\mathcal S_i$ is dense in $L^{p_i}(\R)$, $B_0$
extends uniquely to a bounded bilinear operator
\[
B_0:L^{p_1}(\R)\times L^{p_2}(\R)\to L^{p_3'}(\R),
\]
and
\[
\norm{B_0}\le \norm{T_A}.
\]

We next record the corresponding extension of the reduced identity. Let
$f_i\in L^{p_i}(\Rn)$ be radial and set $F_i=\mathcal F_i f_i$. Choose
$F_{i,k}\in\mathcal S_i$ with $F_{i,k}\to F_i$ in $L^{p_i}(\R)$, and let
$f_{i,k}$ be the radial lifts of $F_{i,k}$. Then
$f_{i,k}\to f_i$ in $L^{p_i}(\Rn)$. The boundedness of $T_A$, together with
the boundedness of the extended operator $B_0$, allows us to pass to the limit
in
\[
\mathcal H(T_A(f_{1,k},f_{2,k}))=B_0(F_{1,k},F_{2,k}).
\]
Consequently,
\begin{equation}\label{eq:extended-reduced-identity}
\mathcal H(T_A(f_1,f_2))=B_0(\mathcal F_1 f_1,\mathcal F_2 f_2)
\end{equation}
in $L^{p_3'}(\R)$ for all radial $f_i\in L^{p_i}(\Rn)$.

Now let $f_i\in L^{p_i}(\Rn)$ be arbitrary. By Lemma~\ref{lem:rearrangement},
\[
\norm{T_A(f_1,f_2)}{p_3'}
\le
\norm{T_A(f_1^*,f_2^*)}{p_3'}.
\]
Applying the extended reduced identity to the radial functions $f_i^*$ gives
\[
\norm{T_A(f_1^*,f_2^*)}{p_3'}
=
\norm{B_0(\mathcal F_1 f_1^*,\mathcal F_2 f_2^*)}{p_3'}.
\]
Therefore
\[
\norm{T_A(f_1,f_2)}{p_3'}
\le
\norm{B_0}\norm{\mathcal F_1 f_1^*}{p_1}
\norm{\mathcal F_2 f_2^*}{p_2}
=
\norm{B_0}\norm{f_1}{p_1}
\norm{f_2}{p_2}.
\]
Taking the supremum over nonzero $f_1$ and $f_2$ yields
\[
\norm{T_A}\le \norm{B_0}.
\]
Together with the already proved reverse inequality, this proves
\eqref{eq:norm-equiv}.
\end{proof}

Choosing nonzero nonnegative test functions with small pairwise separated
supports away from the origin gives \(T_A\not\equiv0\). Hence
\begin{equation}\label{eq:B0-positive}
\norm{B_0}=\norm{T_A}>0.
\end{equation}

	\subsection{A mixed weak--strong estimate}
	
	\begin{lemma}\label{lem:mixed-kernel}
		Let $1<p_1,p_2<\infty$, $1<r<\infty$, and assume that there exist
		$q_1,q_2$ such that
		\begin{equation}\label{eq:qchoice}
			p_i\le q_i<\infty\quad (i=1,2),
			\quad \frac1{q_1}+\frac1{q_2}=1.
		\end{equation}
		For $L\in L^1(\R^2)$, define
		\[
		B_L(U,V)(u):=\iint_{\R^2}L(s,t)U(u+s)V(u+t)\,dsdt.
		\]
		Then, for every $U\in L^{p_1}(\R)\cap L^\infty(\R)$ and
		$V\in L^{p_2}(\R)\cap L^\infty(\R)$,
		\begin{equation}\label{eq:weakstrong-reduced}
			\norm{B_L(U,V)}{r}
			\le \norm{L}{1}\prod_{i=1}^2
			\norm{U_i}{p_i}^{\theta_i}
			\norm{U_i}{\infty}^{1-\theta_i},
		\end{equation}
		where $U_1=U$, $U_2=V$, and
		\[
		\theta_i:=\frac{p_i}{q_i r}\in(0,1),\quad i=1,2.
		\]
		If, in addition, $1<p_3<\infty$, \eqref{eq:scale}--\eqref{eq:pisum},
		\eqref{eq:nonneg}, and \eqref{eq:pair12} hold, let $L=L_0$ be the
		reduced kernel from Lemma~\ref{lem:L0} and set $r=p_3'$. Then there
		exists $C>0$ such that
		\begin{equation}\label{eq:mixed-physical}
			\norm{T_A(f_1,f_2)}{p_3'}
			\le C\prod_{i=1}^2
			\norm{f_i}{p_i}^{\theta_i}
			\norm{f_i}{L^{p_i,\infty}(\Rn)}^{1-\theta_i}
		\end{equation}
		for all nonnegative radially nonincreasing functions
		$f_i\in L^{p_i}(\Rn)$, $i=1,2$.
	\end{lemma}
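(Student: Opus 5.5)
The plan has two parts: first the abstract estimate \eqref{eq:weakstrong-reduced}, by Minkowski plus interpolation, and then the physical estimate \eqref{eq:mixed-physical}, by transfer through Lemma~\ref{lem:L0}, Lemma~\ref{lem:norm-equiv} and the weak-type identity \eqref{eq:weak-identity}.

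\emph{The abstract estimate.} Since $L\in L^1(\R^2)$, I write $B_L(U,V)=\iint L(s,t)\,\tau_{-s}U\,\tau_{-t}V\,ds\,dt$ as an $L^1$-average of products of translates. Minkowski's integral inequality in $L^r$ gives
\[
\norm{B_L(U,V)}{r}\le\iint |L(s,t)|\,\norm{U(\cdot+s)V(\cdot+t)}{r}\,ds\,dt .
\]
For fixed $(s,t)$, apply H\"older with exponents $q_1r$ and $q_2r$; this is allowed because $\frac{1}{q_1r}+\frac{1}{q_2r}=\frac1r$. Translation invariance of Lebesgue norms then bounds each term by $\norm{U}{q_1r}\norm{V}{q_2r}$.

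Since $q_ir\ge q_i\ge p_i$ (using $r>1$), log-convexity of $L^p$ norms gives $\norm{U_i}{q_ir}\le \norm{U_i}{p_i}^{p_i/(q_ir)}\norm{U_i}{\infty}^{1-p_i/(q_ir)}$. This is exactly \eqref{eq:weakstrong-reduced} with $\theta_i=p_i/(q_ir)\in(0,1)$, the range following from $p_i\le q_i$ and $r>1$.

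\emph{Choosing $q_1,q_2$.} Condition \eqref{eq:pair12} says $\frac1{p_1}+\frac1{p_2}\ge1$. I take $\frac1{q_1}=\frac1{p_1}-\frac{\delta}{2}$ and $\frac1{q_2}=\frac1{p_2}-\frac{\delta}{2}$, with $\delta=\frac1{p_1}+\frac1{p_2}-1\ge0$. Then $\frac1{q_1}+\frac1{q_2}=1$ and $\frac1{q_i}\le\frac1{p_i}$. I also need $\frac1{q_i}>0$, and I need this to fail for neither index. That holds because $\frac1{q_1}=\frac12\bigl(\frac1{p_1}-\frac1{p_2}+1\bigr)>0$, since $\frac1{p_2}<1$; similarly for $q_2$. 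So $p_i\le q_i<\infty$, and $L_0\in L^1$ by Lemma~\ref{lem:L0}.

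\emph{Transfer to $\Rn$.} Given nonnegative radially nonincreasing $f_i\in L^{p_i}(\Rn)$, set $F_i=\mathcal F_if_i$.
\begin{itemize}
\item By \eqref{eq:extended-reduced-identity} and \eqref{eq:H-isometry}, $\norm{T_A(f_1,f_2)}{p_3'}=\norm{B_0(F_1,F_2)}{p_3'}$.
\item By \eqref{eq:F-isometry}, $\norm{F_i}{p_i}=\norm{f_i}{p_i}$.
\item By \eqref{eq:weak-identity} and \eqref{eq:Lp-weakLp}, $\norm{F_i}{\infty}\lesssim\norm{f_i}{L^{p_i,\infty}}\le\norm{f_i}{p_i}<\infty$, so $F_i\in L^{p_i}\cap L^\infty$.
\end{itemize}
The abstract estimate was proved for the integral formula \eqref{eq:B0}, while $B_0$ on general inputs is the bounded extension. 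These agree on $L^{p_1}\cap L^\infty\times L^{p_2}\cap L^\infty$: for nonnegative inputs, the pointwise integral is a monotone limit of truncations, and the extension is continuous. Plugging in yields \eqref{eq:mixed-physical}, with $C=\norm{L_0}{1}$ times the constants from \eqref{eq:weak-identity}.

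I expect this identification of the extended operator with the pointwise integral to be the only delicate point. It is handled by approximating with truncations $F_i\mathbf 1_{[-k,k]}$ and using monotone convergence together with $L^{p_i}$ convergence.
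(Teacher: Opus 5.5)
Your proposal is correct and is essentially the paper's argument. The paper bounds $B_L(U,V)$ in $L^1$ (Tonelli and H\"older with exponents $q_1,q_2$) and in $L^\infty$, interpolates, and then uses log-convexity between $L^{p_i}$ and $L^\infty$; your Minkowski step in $L^r$ with H\"older exponents $q_1r,q_2r$ repackages the same estimate and gives the identical constant $\|L\|_1$ and exponents $\theta_i$, and your transfer via the reduced identity, the isometries and the weak-type identity matches the paper's. Your explicit choice of $q_1,q_2$ is valid (the paper implicitly takes $q_1=p_1$, $q_2=p_1'$), and your monotone-truncation identification of the density-extended $B_0$ with the integral formula on nonnegative bounded inputs fills in a point the paper leaves implicit.
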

	
	\begin{proof}
		By Tonelli's theorem and H\"older's inequality with exponents
		$q_1,q_2$,
		\begin{align*}
			\norm{B_L(U,V)}{1}
			&\le \int_{\R}\iint_{\R^2}|L(s,t)|
			|U(u+s)| |V(u+t)|\,ds\,dt\,du \\
			&\le \norm{L}{1}
			\norm{U}{q_1}\norm{V}{q_2}.
		\end{align*}
		Also,
		\[
		\norm{B_L(U,V)}{\infty}
		\le \norm{L}{1}\norm{U}{\infty}
		\norm{V}{\infty}.
		\]
		Interpolation between $L^1$ and $L^\infty$ therefore gives
		\[
		\norm{B_L(U,V)}{r}
		\le \norm{L}{1}
		\norm{U}{q_1}^{1/r}
		\norm{V}{q_2}^{1/r}
		\norm{U}{\infty}^{1-1/r}
		\norm{V}{\infty}^{1-1/r}.
		\]
		Since $p_i\le q_i<\infty$,
		\[
		\norm{U_i}{q_i}
		\le \norm{U_i}{p_i}^{p_i/q_i}
		\norm{U_i}{\infty}^{1-p_i/q_i},
		\quad i=1,2,
		\]
		which proves \eqref{eq:weakstrong-reduced}. The bounds on $q_i$ and
		$r$ imply $0<\theta_i<1$.
		
		Under the additional hypotheses, \eqref{eq:pair12} guarantees that
		exponents $q_1,q_2$ satisfying \eqref{eq:qchoice} can be chosen. For
		nonnegative radial nonincreasing $f_i$, the extended reduced identity
		\eqref{eq:extended-reduced-identity} and the isometries
		\eqref{eq:F-isometry}--\eqref{eq:H-isometry} give
		\[
		\norm{T_A(f_1,f_2)}{p_3'}
		=
		\norm{B_{L_0}(\mathcal F_1f_1,\mathcal F_2f_2)}{p_3'}.
		\]
		Applying \eqref{eq:weakstrong-reduced} with $L=L_0$ and then using
		the radial weak-type identity \eqref{eq:weak-identity} proves
		\eqref{eq:mixed-physical}.
	\end{proof}
	
	We call \((u_j,v_j)\) a normalized maximizing sequence for a bounded
	bilinear operator \(S:X_1\times X_2\to Y\) if
	\(\|u_j\|_{X_1}=\|v_j\|_{X_2}=1\) and
	\(\|S(u_j,v_j)\|_Y\to\|S\|\).
	
	\begin{remark}\label{rem:weak-vanish}
		If $\norm{f_i}{p_i}=1$ for $i=1,2$, then \eqref{eq:mixed-physical} gives
		\[
		\norm{T_A(f_1,f_2)}{p_3'}\le C
		\norm{f_1}{L^{p_1,\infty}(\mathbb{R}^n)}^{1-\theta_1}
		\norm{f_2}{L^{p_2,\infty}(\mathbb{R}^n)}^{1-\theta_2}.
		\]
		Consequently, for a normalized maximizing sequence, the product of the two weak norms cannot tend to zero. By \eqref{eq:Lp-weakLp}, each weak norm is uniformly bounded above; hence neither weak norm can vanish along a maximizing sequence.
	\end{remark}
	
	\subsection{Common scale nonvanishing}

	\begin{lemma}\label{lem:tightness}
Assume that \eqref{eq:scale}--\eqref{eq:pisum}, \eqref{eq:nonneg},
		and \eqref{eq:pair12} hold. 
		Let \(\{(f_{1,j},f_{2,j})\}_{j\ge1}\) be a normalized maximizing
		sequence for \(T_A\).
		Assume in addition that each \(f_{i,j}\) is nonnegative, radial, and radially nonincreasing, and set
		\[
		\Phi_{i,j}:=\mathcal F_i f_{i,j},\quad i=1,2.
		\]
		Then there exist \(R>0\), \(\eta>0\), \(j_0\in \mathbb{N}\) and a sequence \(\tau_j\in\R\) such that for \(j\ge j_0\)
		\begin{equation}\label{eq:tight}
			\int_{-R}^R|\Phi_{1,j}(u+\tau_j)|^{p_1}\,du\ge\eta,
			\quad
			\int_{-R}^R|\Phi_{2,j}(u+\tau_j)|^{p_2}\,du\ge\eta.
		\end{equation}
	\end{lemma}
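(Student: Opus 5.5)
We work entirely on the reduced side, using the extended identity \eqref{eq:extended-reduced-identity} and Lemma~\ref{lem:norm-equiv}: $\norm{B_0(\Phi_{1,j},\Phi_{2,j})}{p_3'}\to\norm{B_0}>0$, with $\norm{\Phi_{i,j}}{p_i}=1$.

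\emph{Step 1: pointwise lower bounds from nonvanishing.} By Remark~\ref{rem:weak-vanish} and \eqref{eq:weak-identity}, there are $\delta>0$ and $j_0$ such that $\norm{\Phi_{i,j}}{\infty}\ge\delta$ for $i=1,2$ and $j\ge j_0$. Because $f_{i,j}$ is radially nonincreasing, $\Phi_{i,j}(u)e^{-nu/p_i}$ is nonincreasing in $u$. If $\Phi_{i,j}(a)\ge\delta/2$, then for $u\in[a-1,a]$ we get $\Phi_{i,j}(u)\ge e^{-n/p_i}\delta/2$. Hence each $\Phi_{i,j}$ carries $L^{p_i}$ mass at least $\eta_0>0$ on some unit interval $[a_{i,j}-1,a_{i,j}]$. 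Set $\tau_j:=a_{1,j}$. The lemma then reduces to showing that $|a_{1,j}-a_{2,j}|$ stays bounded along a subsequence; we then take $R$ larger than this bound plus $1$. To get the full sequence, we argue by contradiction on subsequences.

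\emph{Step 2: separation forces loss of the norm (main obstacle).} Suppose instead that $|a_{1,j}-a_{2,j}|\to\infty$ along a subsequence. We must show that this contradicts maximality, and this is where \eqref{eq:pair12} enters. A single bump of $\Phi_{1,j}$ far from all the mass of $\Phi_{2,j}$ contributes little, since $L_0\in L^1(\R^2)$: the interaction $\iint L_0(s,t)F(u+s)G(u+t)$ of $F$ and $G$ supported in sets at distance $D$ is bounded by a tail of $L_0$ over $\{|s-t|\ge D/2\}$. This tail tends to $0$, as we see by splitting $\norm{B_0}{}$ via the $L^1\times L^{q_1}\times L^{q_2}$-type bound from Lemma~\ref{lem:mixed-kernel}.

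However, $\Phi_{2,j}$ may have mass spread out, so a bump-by-bump comparison is not enough. The plan is to use a profile argument. Decompose $\Phi_{1,j}=\Phi_{1,j}\mathbf 1_{I_j}+\Phi_{1,j}\mathbf 1_{I_j^c}$, where $I_j$ is a large window around $a_{1,j}$ chosen so that $\Phi_{2,j}$ has small mass near $I_j$. Such windows exist by pigeonholing over dyadic annuli in $u$, using that the total mass is $1$. Then
\[
B_0(\Phi_{1,j},\Phi_{2,j})\approx B_0(\Phi_{1,j}\mathbf 1_{I_j^c},\Phi_{2,j}) + o(1)\ \text{in } L^{p_3'}.
\]
This gives
\[
\norm{B_0}\le \norm{B_0}\,\bigl(1-\norm{\Phi_{1,j}\mathbf 1_{I_j}}{p_1}^{p_1}\bigr)^{1/p_1}+o(1)\le \norm{B_0}(1-\eta_0)^{1/p_1}+o(1),
\]
which is a contradiction.

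The delicate part is making the cross-term estimate quantitative. Here we use $1/p_1+1/p_2\ge1$ to pick $q_i\ge p_i$ as in \eqref{eq:qchoice}. Then $\norm{B_0(F,G)}{1}\le \norm{L_0}{1}\norm{F}{q_1}\norm{G}{q_2}$, and the $L^\infty$ bound (via weak norms, uniformly bounded by \eqref{eq:Lp-weakLp}) and interpolation control the interaction of separated pieces by the kernel tail times uniformly bounded quantities. If the direct decomposition fails, we fall back on first approximating $L_0$ in $L^1$ by a compactly supported kernel $L_\epsilon$. Only a bounded-range kernel then interacts, so pieces farther apart than the support radius interact exactly zero, and the error $\norm{L_0-L_\epsilon}{1}$ is controlled uniformly by Lemma~\ref{lem:mixed-kernel}.
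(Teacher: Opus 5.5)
Your Step~1 is sound. Remark~\ref{rem:weak-vanish} and \eqref{eq:weak-identity} give $\|\Phi_{i,j}\|_\infty\ge\delta$ for large $j$. Since $e^{-nu/p_i}\Phi_{i,j}(u)$ is nonincreasing, each $\Phi_{i,j}$ then carries mass at least $\eta_0$ on a unit interval $[a_{i,j}-1,a_{i,j}]$.

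The gap is the contradiction hypothesis in Step~2. The $a_{i,j}$ are arbitrary points where $\Phi_{i,j}\ge\delta/2$. At this stage nothing excludes that $\Phi_{1,j}$ and $\Phi_{2,j}$ each have several far-apart bumps; ruling out such splitting is exactly what the Brezis--Lieb/AM--GM step in the proof of Theorem~\ref{thm:TA-attainment} does later. So $|a_{1,j}-a_{2,j}|\to\infty$ is not the negation of the lemma, and your bump-removal estimate cannot contradict it. It also says nothing about the $\Phi_{2,j}$-mass near $a_{1,j}$: $\Phi_{2,j}$ may carry mass $1/2$ right next to $a_{1,j}$ while $a_{2,j}$ sits at another bump. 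Hence windows $I_j$ with small $\Phi_{2,j}$-mass need not exist. Pigeonholing over dyadic annuli around $a_{1,j}$ only yields an annulus $\{2^k\le|u-a_{1,j}|<2^{k+1}\}$ of small $\Phi_{2,j}$-mass. With only that, $\Phi_{1,j}\mathbf 1_{I_j}$ still interacts with $\Phi_{2,j}\mathbf 1_{I_j}$, so $B_0(\Phi_{1,j},\Phi_{2,j})\approx B_0(\Phi_{1,j}\mathbf 1_{I_j^c},\Phi_{2,j})$ fails. Any splitting of that kind would also only localize to windows of uncontrolled length $2^k$, not to a fixed $R$.

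The fix is to contradict the lemma itself, with no pigeonholing. Fix $R\ge1$ and $0<\eta<\eta_0$, and put $M=\|B_0\|$, $I_j=[a_{1,j}-1,a_{1,j}]$, $J_j=[a_{1,j}-R,a_{1,j}+R]$. Suppose $\int_{J_j}\Phi_{2,j}^{p_2}<\eta$. Then:
\begin{itemize}
\item $B_0(\Phi_{1,j}\mathbf 1_{I_j},\Phi_{2,j}\mathbf 1_{J_j})$ has norm at most $M\eta^{1/p_2}$.
\item In $B_0(\Phi_{1,j}\mathbf 1_{I_j},\Phi_{2,j}\mathbf 1_{J_j^c})$ only $|s-t|\ge R-1$ contributes. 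By \eqref{eq:weakstrong-reduced} and the uniform $L^\infty$ bound on $\Phi_{i,j}$, its norm is at most $C\|L_0\mathbf 1_{\{|s-t|\ge R-1\}}\|_{L^1(\R^2)}$.
\item $\|B_0(\Phi_{1,j}\mathbf 1_{I_j^c},\Phi_{2,j})\|_{p_3'}\le M(1-\eta_0)^{1/p_1}$.
\end{itemize}
For $R$ large and $\eta$ small the sum is at most $cM$ with $c<1$ independent of $j$. By maximality this happens for only finitely many $j$, and $\tau_j=a_{1,j}$ proves the lemma.

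Repaired this way, your route genuinely differs from the paper's. The paper never uses a lower bound on $\|\Phi_{i,j}\|_\infty$ or localized bumps. It replaces $L_0$ by a compactly supported $K$ and bounds $|B_K(\Phi_{1,j},\Phi_{2,j})(u)|\le CA_{1,j}(u)^{1/p_1}A_{2,j}(u)^{1/p_2}$. It then uses $p_3'(1/p_1+1/p_2)>1$ to show that $\sup_u\min\{A_{1,j},A_{2,j}\}\to0$ would force $\|B_K(\Phi_{1,j},\Phi_{2,j})\|_{p_3'}\to0$. Your version replaces that overlap computation with Remark~\ref{rem:weak-vanish}, radial monotonicity, and a direct comparison with the operator norm.
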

	
\begin{proof}
By \eqref{eq:F-isometry}, we obtain 
\(\norm{\Phi_{i,j}}{p_i}=1\) for \(i=1,2\).
Moreover, \eqref{eq:extended-reduced-identity} and
\eqref{eq:H-isometry} yield
\[
\norm{B_0(\Phi_{1,j},\Phi_{2,j})}{p_3'}
=
\norm{T_A(f_{1,j},f_{2,j})}{p_3'}
\rightarrow
\norm{T_A}
=
\norm{B_0},
\]
where the last identity follows from \eqref{eq:norm-equiv}. Hence
\(\{(\Phi_{1,j},\Phi_{2,j})\}_{j\ge1}\) is a normalized maximizing
sequence for \(B_0\). By \eqref{eq:B0-positive}, there exists
\(J\in\mathbb N\) such that
\begin{equation}\label{eq:B0-lower}
\norm{B_0(\Phi_{1,j},\Phi_{2,j})}{p_3'}
\ge \frac12 \norm{B_0}
\end{equation}
for any \(j\ge J\).

Since each \(f_{i,j}\) is nonnegative, radial and radially nonincreasing, \eqref{eq:weak-identity}
and \eqref{eq:Lp-weakLp} imply that there exists $C>0$ such that
\begin{equation}\label{eq:Phi-Linf}
\norm{\Phi_{i,j}}{\infty}\le C,
\quad i=1,2.
\end{equation}
Since \(L_0\ge 0\) and \(L_0 \in L^1(\mathbb R^2)\), for \(\varepsilon>0\), there exists \(K\in L_c^\infty(\mathbb R^2)\) such that \(K\ge0\) and
\begin{equation}\label{eq:Kapprox}
\norm{L_0-K}{1}\le \varepsilon .
\end{equation}

Define
\[
B_K(F,G)(u)
:=
\iint_{\mathbb R^2}K(s,t)F(u+s)G(u+t)\,ds\,dt .
\]
By \eqref{eq:pair12}, \(p_2\le p_1'\), and therefore \eqref{eq:qchoice}
holds for \(p_1\) and \(p_2\). 
By Lemma~\ref{lem:mixed-kernel}, together with
\(\norm{\Phi_{i,j}}{p_i}=1\), \eqref{eq:Phi-Linf}, and
\eqref{eq:Kapprox}, we obtain 
\[
\norm{
B_0(\Phi_{1,j},\Phi_{2,j})
-
B_K(\Phi_{1,j},\Phi_{2,j})
}{p_3'}
\le C\varepsilon,
\]
where \(C\) is independent of \(j\) and \(\varepsilon\). Choosing
\(\varepsilon>0\) sufficiently small, we deduce
from \eqref{eq:B0-lower} that
\begin{equation}\label{eq:BK-lower}
\norm{B_K(\Phi_{1,j},\Phi_{2,j})}{p_3'}
\ge \norm{B_0}/4>0
\end{equation}
for all \(j\ge J\).

Since \(K\in L_c^\infty(\mathbb R^2)\), there exists \(R>0\) such that \(\operatorname{supp}K\subset[-R,R]^2\). Define
\[A_{i,j}(u):=\int_{u-R}^{u+R}|\Phi_{i,j}(v)|^{p_i}\,dv \quad i=1,2.\]
We claim that
\begin{equation}\label{eq:overlap-claim}
\liminf_{j\to\infty}
\sup_{u\in\mathbb R}\min\{A_{1,j}(u),A_{2,j}(u)\}>0.
\end{equation}
Suppose by contradiction that, after passing to a subsequence, 
\begin{equation}\label{eq:no-both-fixed}
\sup_{u\in\mathbb R}\min\{A_{1,j}(u),A_{2,j}(u)\}\rightarrow0.
\end{equation}
Since \(\operatorname{supp}K\subset[-R,R]^2\), we have
\[
B_K(\Phi_{1,j},\Phi_{2,j})(u)
=
\int_{-R}^R\int_{-R}^R
K(s,t)\Phi_{1,j}(u+s)\Phi_{2,j}(u+t)\,ds\,dt .
\]
Thus, by H\"older's inequality,
\begin{align*}
|B_K(\Phi_{1,j},\Phi_{2,j})(u)|
&\le \norm{K}{\infty}
\left(\int_{-R}^R|\Phi_{1,j}(u+s)|\,ds\right)
\left(\int_{-R}^R|\Phi_{2,j}(u+t)|\,dt\right) \\
&\le
\norm{K}{\infty}
(2R)^{1/p_1'+1/p_2'}
A_{1,j}(u)^{1/p_1}A_{2,j}(u)^{1/p_2} \\
&\le
C A_{1,j}(u)^{1/p_1}A_{2,j}(u)^{1/p_2}.
\end{align*}

Writing \(a:=p_3'/p_1\) and \(b:=p_3'/p_2\),
we obtain
\begin{equation}\label{eq:BK-local}
\norm{B_K(\Phi_{1,j},\Phi_{2,j})}{p_3'}^{p_3'}
\le
C\int_{\mathbb R}A_{1,j}(u)^aA_{2,j}(u)^b\,du .
\end{equation}
Moreover, by \eqref{eq:pair12},
\(a+b=p_3'(1/p_1+1/p_2)\ge p_3'>1\).
Choose \(0<\delta<\min\{a,b,a+b-1\}\).
From \eqref{eq:Phi-Linf}, there exists a constant \(M_R\), independent of
\(j\) and \(u\), such that \(0\le A_{i,j}(u)\le M_R\), \(i=1,2\).
Moreover, by Fubini's theorem and \(\norm{\Phi_{i,j}}{p_i}=1\), we obtain 
\begin{equation}\label{eq:Aij-mass}
\int_{\mathbb R} A_{i,j}(u)\,du
=
\int_{\mathbb R}\int_{u-R}^{u+R}|\Phi_{i,j}(v)|^{p_i}\,dv\,du
=
2R,
\quad i=1,2.
\end{equation}

We claim that, for \(0\le x,y\le M_R\) and \(0<\zeta\le1\), if
\(\min\{x,y\}\le\zeta\), then
\begin{equation}\label{eq:small-product}
x^a y^b\le C\zeta^\delta(x+y),
\end{equation}
where \(C\) depends only on \(a,b,\delta\), and \(M_R\). Indeed, if
\(x\le\zeta\), then
\[
x^a y^b
\le
\zeta^\delta x^{a-\delta}y^b
\le
\zeta^\delta (x+y)^{a+b-\delta}
\le
C\zeta^\delta(x+y),
\]
because \(a+b-\delta>1\) and \(x+y\le 2M_R\). The case \(y\le\zeta\) is
analogous.

Fix \(\zeta\in(0,1]\). By \eqref{eq:no-both-fixed}, for \(j\) sufficiently
large, \(\min\{A_{1,j}(u),A_{2,j}(u)\}\le \zeta\) for any
\(u\in\mathbb R\).
Using \eqref{eq:small-product} and \eqref{eq:Aij-mass}, we obtain 
\begin{equation}
\int_{\mathbb R}A_{1,j}(u)^aA_{2,j}(u)^b\,du
\le
C\zeta^\delta
\int_{\mathbb R}\bigl(A_{1,j}(u)+A_{2,j}(u)\bigr)\,du 
\le
C\zeta^\delta.
\end{equation}
Combining this with \eqref{eq:BK-local}, we obtain
\(\norm{B_K(\Phi_{1,j},\Phi_{2,j})}{p_3'}^{p_3'}\le C\zeta^\delta\)
for sufficiently large \(j\). Choosing \(\zeta>0\) so small that
\(\norm{B_K(\Phi_{1,j},\Phi_{2,j})}{p_3'}<\norm{B_0}/4\) contradicts \eqref{eq:BK-lower}. Hence
\eqref{eq:overlap-claim} holds.

It follows from \eqref{eq:overlap-claim} that there exist \(\eta>0\) and 
\(\tau_j\in\mathbb R\) such that \(A_{1,j}(\tau_j)\ge\eta\) and
\(A_{2,j}(\tau_j)\ge\eta\) for sufficiently large \(j\).
Since
\[
A_{i,j}(\tau_j)
=
\int_{\tau_j-R}^{\tau_j+R}|\Phi_{i,j}(v)|^{p_i}\,dv
=
\int_{-R}^{R}|\Phi_{i,j}(u+\tau_j)|^{p_i}\,du,
\] we obtain 
\eqref{eq:tight}.
\end{proof}
	
	\subsection{Proof of Theorem~\ref{thm:TA-attainment} and Corollary~\ref{cor:trilinear-attainment}}\label{subsec:proofs-attainment}
	
		\begin{proof}[Proof of Theorem~\ref{thm:TA-attainment}]
		Let \(\{(f_{1,j},f_{2,j})\}_{j\ge1}\) be a normalized maximizing
		sequence for $T_A$. By Lemma~\ref{lem:rearrangement}, we may assume
		that each $f_{i,j}$ is nonnegative, radial, and radially nonincreasing. For
		\(\Phi_{i,j}:=\mathcal F_i f_{i,j}\), by Lemma~\ref{lem:tightness}, there exist \(R,\eta>0\) and \(\tau_j\in\R\) such that
		\[
		\int_{-R}^R|\Phi_{i,j}(u+\tau_j)|^{p_i}\,du\ge\eta,
		\quad i=1,2,
		\]
		for sufficiently large \(j\). By the definition of \(f_i^\rho\)
		and Lemma~\ref{lem:dilation},
		\(\{(f_{1,j}^{e^{\tau_j}},f_{2,j}^{e^{\tau_j}})\}_{j\ge1}\) is also
		a normalized maximizing sequence for \(T_A\). Since
		\[
		\mathcal F_i(f_{i,j}^{e^{\tau_j}})(u)=\Phi_{i,j}(u+\tau_j),
		\quad i=1,2,
		\]
		we may therefore assume
		\begin{equation}\label{eq:proof-tight}
			\int_{-R}^R|\Phi_{1,j}(u)|^{p_1}\,du\ge\eta,
			\quad
			\int_{-R}^R|\Phi_{2,j}(u)|^{p_2}\,du\ge\eta
		\end{equation}
		for all sufficiently large $j$. Moreover, \((\Phi_{1,j},\Phi_{2,j})\) is a normalized maximizing sequence for \(B_0\).
		
		Write \(f_{i,j}(x)=\varphi_{i,j}(|x|)\). Since \(f_{i,j}\) is
		nonnegative, radial, and radially nonincreasing, for \(i=1,2\) and
\(s>0\), 
\[
1=\norm{f_{i,j}}{p_i}^{p_i}
\ge \int_{\{|x|\le s\}} f_{i,j}(x)^{p_i}\,dx 
\ge \frac{\omega_{n-1}}{n}s^n
\varphi_{i,j}(s)^{p_i}.
\]
This implies that
		\begin{equation}\label{eq:radial-decay-maximizing-sequence}
		0\le \varphi_{i,j}(s)\le
		\left(\frac{n}{\omega_{n-1}}\right)^{1/p_i}s^{-n/p_i}.
		\end{equation}
		For each integer \(m\ge2\) and \(i=1,2\), \eqref{eq:radial-decay-maximizing-sequence} implies that \(\{\varphi_{i,j}\}\) 
is uniformly bounded on \([m^{-1},m]\). Combining this with the fact that each $\varphi_{i,j}$ is nonincreasing, it follows from Helly's selection theorem
		\cite[Sec.~36.5, Theorem~5]{KolmogorovFomin1975}, applied successively to
\(\{\varphi_{i,j}\}\), \(i=1,2\), on \([m^{-1},m]\), and a diagonal extraction in \(m\) that there exist nonnegative nonincreasing functions
		\(\varphi_i\) such that, up to a subsequence,
		\[
		\varphi_{i,j}(\rho)\to\varphi_i(\rho)
		\quad\text{for a.e. }\rho>0,
		\quad i=1,2 .
		\]
		Set \(f_i(x)=\varphi_i(|x|)\) for \(x\ne0\), with arbitrary value at
		the origin. Thus, \(f_{i,j}\to f_i\) a.e. on \(\Rn\), and hence
	\begin{equation}\label{eq:Phi-ae-convergence}
		\Phi_{i,j}(u)\to
		\Phi_i(u)=\omega_{n-1}^{1/p_i}e^{nu/p_i}f_i(e^u e_1)
		\quad\text{for a.e. }u\in\R .
	\end{equation}
		Fatou's lemma yields \(\Phi_i\in L^{p_i}(\R)\) and
		\(\norm{\Phi_i}{p_i}\le1\). By \eqref{eq:Phi-Linf}, \eqref{eq:proof-tight} and \eqref{eq:Phi-ae-convergence}, the dominated convergence theorem yields \(\Phi_1\not\equiv0\) and
		\(\Phi_2\not\equiv0\).
		
		Set \(M=\norm{B_0}=\norm{T_A}>0\), \(r=p_3'\), and
		\(a_i=\norm{\Phi_i}{p_i}\). We have \(0<a_i\le1\). By \eqref{eq:Phi-Linf}, \eqref{eq:Phi-ae-convergence}, and \(L_0\in L^1(\mathbb R^2)\), the dominated convergence theorem yields
		\(B_0(\Phi_{1,j},\Phi_{2,j})(u)\to B_0(\Phi_1,\Phi_2)(u)\).
		Moreover, \eqref{eq:Phi-Linf} and \eqref{eq:Phi-ae-convergence} imply
		\(\Phi_i\in L^\infty(\R)\). Set \(h_{i,j}=\Phi_{i,j}-\Phi_i\). Then
		\(h_{i,j}\to0\) a.e. and
		\begin{equation}\label{eq:hij-bounds}
		\{h_{i,j}\} \text{ is bounded in } L^{p_i}(\R)\cap L^\infty(\R),
		\quad i=1,2.
		\end{equation}
		By the Brezis--Lieb lemma,
		\begin{equation}\label{eq:hij-BL}
		\norm{h_{i,j}}{p_i}^{p_i}=1-a_i^{p_i}+o(1),
		\quad i=1,2.
		\end{equation}
		Applying Brezis--Lieb again to 
		\(B_0(\Phi_{1,j},\Phi_{2,j})\), we obtain
		\begin{equation}\label{eq:B0-BL-splitting}
		\norm{B_0(\Phi_{1,j},\Phi_{2,j})}{r}^r
		=
		\norm{B_0(\Phi_1,\Phi_2)}{r}^r
		+\norm{B_0(\Phi_{1,j},\Phi_{2,j})-B_0(\Phi_1,\Phi_2)}{r}^r
		+o(1).
		\end{equation}
		Noting that
		\[
		B_0(\Phi_{1,j},\Phi_{2,j})-B_0(\Phi_1,\Phi_2)
		=
		B_0(h_{1,j},h_{2,j})+B_0(\Phi_1,h_{2,j})+B_0(h_{1,j},\Phi_2),
		\]
		we obtain
		\begin{equation}\label{eq:B0-cross-term-estimate}
		\begin{aligned}
		&\norm{B_0(\Phi_{1,j},\Phi_{2,j})-B_0(\Phi_1,\Phi_2)
		-B_0(h_{1,j},h_{2,j})}{r} \\
		&\le
		\norm{B_0(\Phi_1,h_{2,j})}{r}
		+\norm{B_0(h_{1,j},\Phi_2)}{r}.
		\end{aligned}
		\end{equation}
		We claim that
		\[
		\norm{B_0(\Phi_1,h_{2,j})}{r}
		+\norm{B_0(h_{1,j},\Phi_2)}{r}\to0.
		\]
	 By \eqref{eq:pair12}, fix \(q_i\) with \(p_i\le q_i<\infty\),
\(i=1,2\), and \(1/q_1+1/q_2=1\). Fix
		\(\varepsilon>0\), choose \(\Psi\in L_c^\infty(\R)\) and
		\(K\in L_c^\infty(\R^2)\) satisfying
		\[
		\norm{\Psi}{\infty}\le 2\norm{\Phi_1}{\infty},
		\quad
		\norm{\Phi_1-\Psi}{p_1}+\norm{L_0-K}{1}<\varepsilon.
		\]
	By \eqref{eq:weakstrong-reduced} and \eqref{eq:hij-bounds}, we obtain
		\[
		\begin{aligned}
		&\norm{B_0(\Phi_1,h_{2,j})-B_K(\Psi,h_{2,j})}{r} \\
		\le &
		\norm{B_0(\Phi_1-\Psi,h_{2,j})}{r}
		+\norm{B_{L_0-K}(\Psi,h_{2,j})}{r} \\
		\le &
		\norm{B_0}\norm{\Phi_1-\Psi}{p_1}\norm{h_{2,j}}{p_2}
		+C\norm{L_0-K}{1}
		\le C\varepsilon,
		\end{aligned}
		\]
		where \(C>0\) is independent of \(j\) and \(\varepsilon\).
		Fubini's theorem and the dominated convergence theorem yield \(B_K(\Psi,h_{2,j})\to0\) a.e.
		Since \(K\) and \(\Psi\) are compactly supported, it follows from
		\eqref{eq:hij-bounds} that there exist a compact interval \(I\subset\R\)
		and \(C>0\), both independent of \(j\), such that
		\[
		\operatorname{supp} B_K(\Psi,h_{2,j})\subset I,
		\quad
		\norm{B_K(\Psi,h_{2,j})}{\infty}\le C.
		\]
		Thus the dominated convergence theorem yields
\(
\norm{B_K(\Psi,h_{2,j})}{r}\to0.
		\)
		Consequently,
		\begin{equation}\label{eq:Phi1-h2-limsup}
		\limsup_{j\to\infty}\norm{B_0(\Phi_1,h_{2,j})}{r}\le C\varepsilon.
		\end{equation}
		Letting \(\varepsilon\to0^+\), \eqref{eq:Phi1-h2-limsup} yields
		\(
		\norm{B_0(\Phi_1,h_{2,j})}{r}\to0.
		\)
		The same argument yields
		\(		\norm{B_0(h_{1,j},\Phi_2)}{r}\to0.
\)  This completes the claim. Combining this with \eqref{eq:B0-cross-term-estimate}, we obtain
		\begin{equation}\label{eq:B0-difference-reduction}
\norm{B_0(\Phi_{1,j},\Phi_{2,j})-B_0(\Phi_1,\Phi_2)}{r}
=
\norm{B_0(h_{1,j},h_{2,j})}{r}+o(1).
		\end{equation}

		By  \eqref{eq:hij-BL},
		\begin{equation}\label{eq:h1h2-limsup}
		\limsup_{j\to\infty}\norm{B_0(h_{1,j},h_{2,j})}{r}
		\le
		M(1-a_1^{p_1})^{1/p_1}(1-a_2^{p_2})^{1/p_2}.
		\end{equation}
		Combining \eqref{eq:B0-difference-reduction}, \eqref{eq:h1h2-limsup},
		and \eqref{eq:B0-BL-splitting} with
		\(\norm{B_0(\Phi_{1,j},\Phi_{2,j})}{r}\to M\), we obtain
		\begin{equation}\label{eq:limit-splitting-M}
		M^r
		\le
		\norm{B_0(\Phi_1,\Phi_2)}{r}^r
		+M^r(1-a_1^{p_1})^{r/p_1}(1-a_2^{p_2})^{r/p_2}.
		\end{equation}
		Set \(m_1=a_1^{p_1}\), \(m_2=a_2^{p_2}\), \(\gamma_1=r/p_1\), and
		\(\gamma_2=r/p_2\). By \eqref{eq:pair12} and \(r=p_3'>1\), one has
		\(\gamma_1+\gamma_2>1\). Since
		\(\norm{B_0(\Phi_1,\Phi_2)}{r}\le Ma_1a_2\), \eqref{eq:limit-splitting-M}
		yields
\begin{equation}\label{eq:xy-lower}
		1\le m_1^{\gamma_1} m_2^{\gamma_2}+(1-m_1)^{\gamma_1}(1-m_2)^{\gamma_2}.
		\end{equation}
		The weighted AM--GM inequality
		\cite[Theorem~9]{HardyLittlewoodPolya1952} yields
		\[
		m_1^{\gamma_1} m_2^{\gamma_2}\le \sigma^{\gamma_1+\gamma_2},\quad
		(1-m_1)^{\gamma_1}(1-m_2)^{\gamma_2}\le(1-\sigma)^{\gamma_1+\gamma_2},
		\]
	where \(\sigma=\frac{\gamma_1 m_1+\gamma_2 m_2}{\gamma_1+\gamma_2}\).
		Noting that \(0\le\sigma\le1\) and \(\gamma_1+\gamma_2>1\), we obtain 
		\[
		m_1^{\gamma_1} m_2^{\gamma_2}+(1-m_1)^{\gamma_1}(1-m_2)^{\gamma_2}
		\le \sigma^{\gamma_1+\gamma_2}+(1-\sigma)^{\gamma_1+\gamma_2}\le1.
		\]
		In view of \eqref{eq:xy-lower}, $\sigma^{\gamma_1+\gamma_2}+(1-\sigma)^{\gamma_1+\gamma_2}=1$. Since
		\(\gamma_1+\gamma_2>1\), 
		\(\sigma\in\{0,1\}\). The positivity of \(a_1,a_2\) excludes
		\(\sigma=0\), so \(\sigma=1\). Since \(m_1,m_2\le1\), \(m_1=m_2=1\), which implies \(a_1=a_2=1\). By
		\eqref{eq:limit-splitting-M}, we deduce that 
		\(\norm{B_0(\Phi_1,\Phi_2)}{r}=M\).
		For \(f_i\), it follows from \eqref{eq:F-isometry}, \eqref{eq:norm-equiv}, and \eqref{eq:extended-reduced-identity} that
		\[
		\norm{f_i}{p_i}=1,
		\quad
		\norm{T_A(f_1,f_2)}{p_3'}
		=\norm{B_0(\Phi_1,\Phi_2)}{p_3'}=
		\norm{T_A}.
		\]
		This completes the proof.
	\end{proof}
	
	\begin{proof}[Proof of Corollary~\ref{cor:trilinear-attainment}]
		Without loss of generality, we may assume \eqref{eq:pair12}.
		By Theorem~\ref{thm:TA-attainment}, there exist nonnegative functions
		\(f_i\in L^{p_i}(\Rn)\), \(i=1,2\), such that
		\[
		\norm{f_1}{p_1}=\norm{f_2}{p_2}=1,\quad
		\norm{T_A(f_1,f_2)}{p_3'}=\norm{T_A}.
		\]
		Set \(g=T_A(f_1,f_2)\in L^{p_3'}(\Rn)\). Then \(g\ge0\) a.e. and \(\norm{g}{p_3'}=\norm{T_A}>0\). Define
		\(f_3=g^{p_3'-1}/\norm{g}{p_3'}^{p_3'-1}\).
		A direct computation yields
		\(\norm{f_3}{p_3}=1\) and
		\[
		\Lambda_A(f_1,f_2,f_3)
		=\int_{\Rn}g(x)f_3(x)\,dx
		=\frac{\int_{\Rn}g(x)^{p_3'}\,dx}{\norm{g}{p_3'}^{p_3'-1}}
		=\norm{g}{p_3'}
		=\norm{T_A}
		=C_A(p_1,p_2,p_3).
		\]
		Thus \(C_A(p_1,p_2,p_3)\) is attained by the nonnegative normalized
		triple \((f_1,f_2,f_3)\).
		
		By \eqref{eq:rearranged-trilinear-ineq}, we obtain 
		\[
		C_A(p_1,p_2,p_3)
		\ge
		\Lambda_A(f_1^*,f_2^*,f_3^*)
		\ge
		\Lambda_A(f_1,f_2,f_3)
		=C_A(p_1,p_2,p_3).
		\]
		Thus \(C_A(p_1,p_2,p_3)\) is attained by
		\((f_1^*,f_2^*,f_3^*)\). 
		This proves the corollary.
	\end{proof}
	
\section{Diagonal reduction and Kelvin invariance}\label{sec:euler-diagonal}

This section derives the Euler--Lagrange system, proves the diagonal reduction, establishes Kelvin invariance, and records the explicit unweighted conformal example. 
We first derive the Euler--Lagrange system for normalized extremizing triples.

\begin{proposition}\label{prop:EL}
	Let \(f_i\in L^{p_i}(\Rn)\), \(i=1,2,3\), be nonnegative functions such that
	\[
	\norm{f_i}{p_i}=1,\quad i=1,2,3,\quad
	\Lambda_A(f_1,f_2,f_3)=C_A(p_1,p_2,p_3).
	\]
	Then \((f_1,f_2,f_3)\) satisfies the following system almost everywhere:
	\begin{align}
		C_A(p_1,p_2,p_3)f_1(x_1)^{p_1-1}
		&=\iint_{\Rn\times\Rn}K_A(x_1,x_2,x_3)f_2(x_2)f_3(x_3)\,dx_2\,dx_3,\label{eq:EL1}\\
		C_A(p_1,p_2,p_3)f_2(x_2)^{p_2-1}
		&=\iint_{\Rn\times\Rn}K_A(x_1,x_2,x_3)f_1(x_1)f_3(x_3)\,dx_1\,dx_3,\label{eq:EL2}\\
		C_A(p_1,p_2,p_3)f_3(x_3)^{p_3-1}
		&=\iint_{\Rn\times\Rn}K_A(x_1,x_2,x_3)f_1(x_1)f_2(x_2)\,dx_1\,dx_2.\label{eq:EL3}
	\end{align}
\end{proposition}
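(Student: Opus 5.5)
The plan is to use the Hölder-equality characterization, one variable at a time, rather than differentiating the functional. Here \(K_A(x_1,x_2,x_3)\) denotes the full nonnegative kernel of \(\Lambda_A\), that is, the product of the three pairwise factors and the three point weights. Fix the index \(3\); the other two equations follow by symmetry of the argument.

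Set
\[
g(x_3):=\iint K_A(x_1,x_2,x_3)f_1(x_1)f_2(x_2)\,dx_1\,dx_2 .
\]
By the discussion after \eqref{eq:def-TA-intro}, for nonnegative inputs this pointwise integral agrees a.e. with \(T_A(f_1,f_2)\), which lies in \(L^{p_3'}\). By the definition of \(C_A\) and duality,
\[
\norm{g}{p_3'}=\sup_{\norm{h}{p_3}=1}\Lambda_A(f_1,f_2,h)\le C_A\norm{f_1}{p_1}\norm{f_2}{p_2}=C_A .
\]
On the other hand, Hölder's inequality gives
\[
C_A=\Lambda_A(f_1,f_2,f_3)=\int g f_3\le \norm{g}{p_3'}\norm{f_3}{p_3}=\norm{g}{p_3'}.
\]
Hence \(\norm{g}{p_3'}=C_A\), and equality holds in Hölder's inequality.

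Since \(1<p_3<\infty\), equality in Hölder with \(g,f_3\ge0\) forces \(f_3^{p_3}\) and \(g^{p_3'}\) to be proportional a.e. Equivalently, \(g=c\,f_3^{p_3-1}\) a.e. for some \(c\ge0\). Integrating against \(f_3\) and using \(\norm{f_3}{p_3}=1\) gives
\[
c=\int g f_3=C_A ,
\]
which is \eqref{eq:EL3}. The identical argument applies to the other two slots. It uses the bilinear operator obtained by integrating out \((x_2,x_3)\) or \((x_1,x_3)\), together with the boundedness from \(L^{p_2}\times L^{p_3}\) to \(L^{p_1'}\), which is again just the trilinear bound \eqref{ineq1}. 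This yields \eqref{eq:EL1} and \eqref{eq:EL2}.

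The only delicate point is measure-theoretic: identifying the pointwise double integral with the \(L^{p_i'}\) duality representative, and applying Hölder's equality case correctly. I would address it with Tonelli's theorem, since all integrands are nonnegative. Tonelli ensures the pointwise integral is a well-defined measurable function with values in \([0,\infty]\). Its pairing with every nonnegative \(h\in L^{p_i}\) equals \(\Lambda_A\) and is bounded by \(C_A\norm{h}{p_i}\), so the function is finite a.e. and lies in \(L^{p_i'}\).

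For the equality case, I would use the standard fact: if \(\int gf=\norm{g}{p'}\norm{f}{p}\) with \(\norm{f}{p}=1\) and \(g,f\ge0\), then \(g=\norm{g}{p'}f^{p-1}\). One way to see this is to note that
\[
f^{p-1}=\frac{g^{p'-1}}{\norm{g}{p'}^{p'-1}}
\]
is the unique norming functional, by the strict convexity of \(L^{p}\).
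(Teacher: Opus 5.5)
Your proof is correct, but it reaches the Euler--Lagrange identities by a different mechanism than the paper. The paper argues variationally. It represents $\varphi\mapsto\Lambda_A(\varphi,f_2,f_3)$ by some $G_1\in L^{p_1'}$ via Riesz representation and identifies $G_1$ a.e.\ with the pointwise double integral. It then perturbs $f_1$ to $(f_1+\varepsilon\varphi)/\norm{f_1+\varepsilon\varphi}{p_1}$ with $\varphi\in L^{p_1}\cap L_c^\infty$, differentiates the $L^{p_1}$ norm at $f_1$ to get $\int(G_1-C_Af_1^{p_1-1})\varphi=0$, and concludes by density.

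You instead use that $\Lambda_A$ is linear in each slot. The partial integral $g$ satisfies $\norm{g}{p_3'}\le C_A$ by the sharp inequality and $\int gf_3=C_A$ by extremality. So H\"older holds with equality, and its equality case yields $g=C_Af_3^{p_3-1}$ at once, with the multiplier read off as $\int gf_3$. This removes the perturbation, the derivative of the norm, and the density step. It also gives for free that $\norm{T_A(f_1,f_2)}{p_3'}=C_A$, i.e.\ any two components of an extremizing triple form a norm-attaining pair for the corresponding bilinear operator. Your Tonelli/converse-H\"older treatment of the pointwise integral is also more self-contained than going through a Riesz representative.

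The paper's first-variation argument is the more portable template, since it does not need the partial functional to be paired with the constraint by duality. Here, though, both routes rest on the same fact: for $1<p<\infty$, a nonnegative unit vector $f\in L^p$ has the unique norming functional $f^{p-1}$ in $L^{p'}$.

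One slip in your last paragraph: the displayed identity has the exponent misplaced. The unit element of $L^p$ norming $g$ is $f=g^{p'-1}/\norm{g}{p'}^{p'-1}$, equivalently $f^{p-1}=g/\norm{g}{p'}$. As written, your formula would give $g=\norm{g}{p'}f^{(p-1)^2}$. Your main-line statement ($g^{p_3'}$ proportional to $f_3^{p_3}$, hence $g=cf_3^{p_3-1}$) is correct, so the argument itself is unaffected.
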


\begin{proof}
	Write \(C_A=C_A(p_1,p_2,p_3)\). We prove only the first identity, since the other two are obtained by the same argument.
	
	Define
	\[
	\ell_1(\varphi):=\Lambda_A(\varphi,f_2,f_3),
	\quad \varphi\in L^{p_1}(\Rn).
	\]
	The sharp inequality and the normalizations of \(f_2\) and \(f_3\) give
	\[
	|\ell_1(\varphi)|\le C_A\norm{\varphi}{p_1}.
	\]
	Hence \(\ell_1\in (L^{p_1})^*=L^{p_1'}\). Let \(G_1\in L^{p_1'}(\Rn)\) be its representing function:
	\[
	\ell_1(\varphi)=\int_{\Rn}G_1(x_1)\varphi(x_1)\,dx_1.
	\]
	By the standard truncation of the nonnegative kernel, followed by Tonelli's theorem and monotone convergence, this representative may be chosen so that
	\[
	G_1(x_1)=\iint_{\Rn\times\Rn}
	K_A(x_1,x_2,x_3)f_2(x_2)f_3(x_3)\,dx_2\,dx_3
	\]
	for almost every \(x_1\).
	
	Let \(\varphi\in L^{p_1}(\Rn)\cap L_c^\infty(\Rn)\) be real-valued and set
	\[
		F_\varepsilon:=\frac{f_1+\varepsilon\varphi}{\norm{f_1+\varepsilon\varphi}{p_1}}.
		\]
	For \(|\varepsilon|\) small, \(\norm{F_\varepsilon}{p_1}=1\), and therefore
	\[
	\Phi(\varepsilon):=\Lambda_A(F_\varepsilon,f_2,f_3)
	\]
	has a local maximum at \(\varepsilon=0\). By linearity in the first component,
	\[
		\Phi(\varepsilon)
		=\frac{C_A+\varepsilon\displaystyle\int_{\Rn}G_1(x_1)\varphi(x_1)\,dx_1}{\norm{f_1+\varepsilon\varphi}{p_1}}.
		\]
	Since \(p_1>1\) and \(\norm{f_1}{p_1}=1\), the \(L^{p_1}\)-norm is differentiable at \(f_1\) and
	\[
	\left.\frac{d}{d \varepsilon}\right|_{\varepsilon=0}\left\|f_1+\varepsilon \varphi\right\|_{p_1}=\int_{\Rn}f_1(x_1)^{p_1-1}\varphi(x_1)\,dx_1.
	\]
	The condition \(\Phi'(0)=0\) gives
	\[
	\int_{\Rn}\Bigl(G_1(x_1)-C_A f_1(x_1)^{p_1-1}\Bigr)\varphi(x_1)\,dx_1=0.
	\]
	By the density of \(L^{p_1}(\Rn)\cap L_c^\infty(\Rn)\) in \(L^{p_1}(\Rn)\), and since
	\(G_1-C_Af_1^{p_1-1}\in L^{p_1'}(\Rn)\), it follows that
	\(G_1=C_Af_1^{p_1-1}\) almost everywhere. This proves \eqref{eq:EL1}.
	
	Repeating the same constrained variation in the second and third variables gives \eqref{eq:EL2} and \eqref{eq:EL3}.
\end{proof}

Set
\[
U_i(x):=f_i(x)^{p_i-1},
\qquad s_i:=\frac{1}{p_i-1},
\qquad i=1,2,3.
\]
Then \(f_i=U_i^{s_i}\).
Consequently, \eqref{eq:EL1}--\eqref{eq:EL3} can be rewritten, with the common constant \(C:=C_A(p_1,p_2,p_3)^{-1}\), as
\begin{equation}\label{eq:general-system}
	\left\{
	\begin{aligned}
		U_1(x_1)&=C\iint_{\Rn\times\Rn}K_A(x_1,x_2,x_3)U_2(x_2)^{s_2}U_3(x_3)^{s_3}\,dx_2dx_3,\\
		U_2(x_2)&=C\iint_{\Rn\times\Rn}K_A(x_1,x_2,x_3)U_1(x_1)^{s_1}U_3(x_3)^{s_3}\,dx_1dx_3,\\
		U_3(x_3)&=C\iint_{\Rn\times\Rn}K_A(x_1,x_2,x_3)U_1(x_1)^{s_1}U_2(x_2)^{s_2}\,dx_1dx_2.
	\end{aligned}
	\right.
\end{equation}

\subsection{The fully symmetric weighted system and two lemmas}
We next specialize \eqref{eq:general-system} to the fully symmetric weighted case
\begin{equation}\label{eq:fully-symm}
		\alpha:=\alpha_1=\alpha_2=\alpha_3,
		\quad \lambda:=\alpha_{12}=\alpha_{13}=\alpha_{23},
		\quad p:=p_1=p_2=p_3.
	\end{equation}
	Then \(s:=s_1=s_2=s_3=1/(p-1)\). 
Upon identifying \(x_1\) with \(x\), \(x_2\) with \(y\), and \(x_3\) with \(z\), \eqref{eq:general-system} becomes
\begin{equation}
	\left\{
	\begin{aligned}
		U_1(x)&=C |x|^{-\alpha}\iint_{\Rn\times\Rn}
		\frac{U_2(y)^s |y|^{-\alpha}U_3(z)^s |z|^{-\alpha}}
		{|x-y|^\lambda |x-z|^\lambda |y-z|^\lambda}\,dy\,dz,\\
		U_2(y)&=C |y|^{-\alpha}\iint_{\Rn\times\Rn}
		\frac{U_1(x)^s |x|^{-\alpha}U_3(z)^s |z|^{-\alpha}}
		{|x-y|^\lambda |x-z|^\lambda |y-z|^\lambda}\,dx\,dz,\\
		U_3(z)&=C |z|^{-\alpha}\iint_{\Rn\times\Rn}
		\frac{U_1(x)^s |x|^{-\alpha}U_2(y)^s |y|^{-\alpha}}
		{|x-y|^\lambda |x-z|^\lambda |y-z|^\lambda}\,dx\,dy.
	\end{aligned}
	\right.
\end{equation}

\begin{lemma}\label{lem:Riesz-positive}
	Let $0<\lambda<n$. For every real-valued $\Psi \in C_c^{\infty}\left(\mathbb{R}^n\right)$,
	\begin{equation}\label{eq:Riesz-positive}
		\iint_{\Rn\times\Rn}\frac{\Psi(x)\Psi(y)}{|x-y|^\lambda}\,dx\,dy
		=c_{n,\lambda}\int_{\Rn}|\widehat\Psi(\xi)|^2|\xi|^{\lambda-n}\,d\xi\ge0,
	\end{equation}
	where \(c_{n,\lambda}>0\) depends only on \(n,\lambda\) and on the Fourier transform convention.
\end{lemma}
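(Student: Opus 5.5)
The plan is to compute the double integral on the left of \eqref{eq:Riesz-positive} in Fourier space, using the fact that the Fourier transform of the locally integrable tempered distribution \(|x|^{-\lambda}\), \(0<\lambda<n\), is again a positive multiple of a power. Concretely, we use the classical identity
\[
\widehat{|\cdot|^{-\lambda}}(\xi)=c_{n,\lambda}'\,|\xi|^{\lambda-n},\qquad
c_{n,\lambda}'=\pi^{\lambda-n/2}\frac{\Gamma\bigl(\frac{n-\lambda}{2}\bigr)}{\Gamma\bigl(\frac{\lambda}{2}\bigr)}>0,
\]
with the convention \(\widehat f(\xi)=\int f(x)e^{-2\pi i x\cdot\xi}\,dx\). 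The constant is positive because both Gamma arguments are positive when \(0<\lambda<n\).

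To avoid distributional subtleties, I would prove this identity through the Gaussian subordination formula
\[
|x|^{-\lambda}=\frac{\pi^{\lambda/2}}{\Gamma(\lambda/2)}\int_0^\infty e^{-\pi t|x|^2}t^{\lambda/2-1}\,dt.
\]
Substituting it into the left side of \eqref{eq:Riesz-positive} and applying Fubini gives
\[
\iint\frac{\Psi(x)\Psi(y)}{|x-y|^\lambda}\,dx\,dy=\frac{\pi^{\lambda/2}}{\Gamma(\lambda/2)}\int_0^\infty t^{\lambda/2-1}\iint\Psi(x)\Psi(y)e^{-\pi t|x-y|^2}\,dx\,dy\,dt .
\]
Fubini is justified because \(\Psi\in C_c^\infty\) and the positive integrand integrates to \(\iint|\Psi(x)||\Psi(y)||x-y|^{-\lambda}<\infty\), the kernel being locally integrable since \(\lambda<n\).

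For fixed \(t\), the inner integral is computed by Plancherel, since the Gaussian is a Schwartz function with \(\widehat{e^{-\pi t|\cdot|^2}}(\xi)=t^{-n/2}e^{-\pi|\xi|^2/t}\). Because \(\Psi\) is real, this gives
\[
\iint\Psi(x)\Psi(y)e^{-\pi t|x-y|^2}\,dx\,dy=\int|\widehat\Psi(\xi)|^2t^{-n/2}e^{-\pi|\xi|^2/t}\,d\xi\ge0 .
\]
Everything is nonnegative, so Tonelli allows us to exchange the \(t\) and \(\xi\) integrals. The \(t\)-integral is then
\[
\int_0^\infty t^{(\lambda-n)/2-1}e^{-\pi|\xi|^2/t}\,dt=\Gamma\Bigl(\frac{n-\lambda}{2}\Bigr)\pi^{(\lambda-n)/2}|\xi|^{\lambda-n},
\]
which follows from the substitution \(t\mapsto 1/t\) and converges because \(n-\lambda>0\). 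This yields \eqref{eq:Riesz-positive} with \(c_{n,\lambda}=\pi^{\lambda-n/2}\Gamma\bigl(\frac{n-\lambda}{2}\bigr)/\Gamma\bigl(\frac{\lambda}{2}\bigr)>0\). The right-hand side is finite, since \(|\widehat\Psi|^2\) is Schwartz and \(|\xi|^{\lambda-n}\) is locally integrable.

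The only step needing care is justifying the interchanges of integration. The \(\xi\)-side is handled by Tonelli because every integrand is nonnegative. The \((x,y)\)-side relies on the absolute integrability noted above, which holds because \(\Psi\) is compactly supported and bounded. Nonnegativity of the left side then follows directly from the positivity of \(c_{n,\lambda}\).
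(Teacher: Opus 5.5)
Your proof is correct. It rests on the same Fourier-side identity as the paper's proof, but you justify that identity by a more elementary route.

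The paper quotes the distributional formula $\widehat{|\cdot|^{-\lambda}}=c_{n,\lambda}|\xi|^{\lambda-n}$ in $\mathcal S'(\Rn)$ and applies Parseval in the $\mathcal S'$--$\mathcal S$ pairing to $\langle |x|^{-\lambda}*\Psi,\Psi\rangle$. You instead write $|x|^{-\lambda}$ as a positive superposition of Gaussians and apply ordinary Plancherel to each Gaussian form. The interchanges of integration are then handled by Fubini, justified by local integrability of $|x|^{-\lambda}$, and by Tonelli.

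The paper's version is shorter but relies on distribution theory. Yours is self-contained and gives the explicit constant $\pi^{\lambda-n/2}\Gamma(\frac{n-\lambda}{2})/\Gamma(\frac{\lambda}{2})$ for the $e^{-2\pi i x\cdot\xi}$ convention. It also shows nonnegativity already for each Gaussian, before any $t$-integral is evaluated.

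Your argument also never uses the smoothness of $\Psi$. The Fubini step needs only that $\Psi$ is bounded and compactly supported, and the Gaussian step is Plancherel in $L^2$. So the same computation applies verbatim to real $\Psi\in L_c^\infty(\Rn)$. That would make the mollification step at the start of the proof of Lemma~\ref{lem:riesz-abs-positive} unnecessary.
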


\begin{proof}
	Since \(|x|^{-\lambda}\in L^1_{\rm loc}(\mathbb R^n)\) and
	\(\Psi\in C_c^\infty(\mathbb R^n)\), the double integral is finite and
	\[
	\iint_{\mathbb R^n\times\mathbb R^n}
	|x-y|^{-\lambda}\Psi(x)\Psi(y)\,dx\,dy
	=\langle |x|^{-\lambda}*\Psi,\Psi\rangle .
	\]
	By the standard Riesz-kernel formula,
	\[
	\widehat{|\cdot|^{-\lambda}}(\xi)=c_{n,\lambda}|\xi|^{\lambda-n}
	\quad\text{in } \mathcal S'(\mathbb R^n),
	\quad c_{n,\lambda}>0 .
	\]
	Using the Parseval identity in the \(\mathcal S'\)-\(\mathcal S\) pairing gives
	\[
	\langle |x|^{-\lambda}*\Psi,\Psi\rangle
	=C_{\mathcal F}c_{n,\lambda}
	\int_{\mathbb R^n}
	|\xi|^{\lambda-n}\widehat\Psi(\xi)\widehat\Psi(-\xi)\,d\xi .
	\]
	Since \(\Psi\) is real-valued, \(\widehat\Psi(-\xi)=\overline{\widehat\Psi(\xi)}\). Absorbing the positive
	Fourier-convention constant \(C_{\mathcal F}\) into \(c_{n,\lambda}\), we obtain
	\[
	\iint_{\mathbb R^n\times\mathbb R^n}
	\frac{\Psi(x)\Psi(y)}{|x-y|^\lambda}\,dx\,dy
	=
	{c_{n,\lambda}}\int_{\mathbb R^n}
	|\widehat\Psi(\xi)|^2|\xi|^{\lambda-n}\,d\xi\ge 0 .
	\]
\end{proof}

\begin{lemma}\label{lem:riesz-abs-positive}
	If \(0<\lambda<n\), \(\Psi\) is a real-valued measurable function satisfying
	\[
	I_\lambda(\Psi):=\iint_{\Rn\times\Rn}
	\frac{|\Psi(x)|\,|\Psi(y)|}{|x-y|^\lambda}\,dx\,dy<\infty,
	\]
	then the signed Riesz form is absolutely convergent and
	\begin{equation}\label{eq:absolute-riesz-positive}
		\iint_{\Rn\times\Rn}
		\frac{\Psi(x)\Psi(y)}{|x-y|^\lambda}\,dx\,dy\ge0.
	\end{equation}
\end{lemma}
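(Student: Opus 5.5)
We will extend Lemma~\ref{lem:Riesz-positive} from $C_c^\infty$ to the class $I_\lambda(\Psi)<\infty$ by approximation. The delicate point is that smooth mollifications of $\Psi$ need not converge in the energy seminorm without an argument. We therefore pass through the positive-definite structure of the kernel itself, using the fact that $|x|^{-\lambda}$ is a superposition of Gaussians.

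Absolute convergence is immediate, since the integrand is dominated by $|\Psi(x)||\Psi(y)||x-y|^{-\lambda}$, which is integrable by hypothesis. Hence Fubini applies and the signed form is well defined.

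For positivity we use the subordination identity
\[
|x-y|^{-\lambda}=\frac{1}{\Gamma(\lambda/2)}\int_0^\infty t^{\lambda/2-1}e^{-t|x-y|^2}\,dt .
\]
The Gaussian $e^{-t|x-y|^2}$ is a positive definite kernel: its Fourier transform is a positive Gaussian, so for any $g\in L^1(\Rn)$
\[
\iint g(x)g(y)e^{-t|x-y|^2}\,dx\,dy=c_{n,t}\int|\widehat g(\xi)|^2e^{-|\xi|^2/(4t)}\,d\xi\ge0 .
\]
The difficulty is that $\Psi$ itself need not be in $L^1$. We handle this by truncation. Set $\Psi_k:=\Psi\mathbf 1_{\{|x|\le k,\ |\Psi|\le k\}}\in L^1\cap L^\infty$ with compact support. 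For each $k$, Tonelli (applied to the absolute values, which are integrable since $\Psi_k\in L^1\cap L^\infty$ and $|x|^{-\lambda}$ is locally integrable) lets us swap the $t$-integral with the $x,y$-integrals:
\[
\iint\frac{\Psi_k(x)\Psi_k(y)}{|x-y|^\lambda}\,dx\,dy=\frac1{\Gamma(\lambda/2)}\int_0^\infty t^{\lambda/2-1}\iint\Psi_k(x)\Psi_k(y)e^{-t|x-y|^2}\,dx\,dy\,dt\ge0 .
\]
Alternatively, one can mollify $\Psi_k$ and invoke Lemma~\ref{lem:Riesz-positive} directly, using $L^1\cap L^2$ convergence together with HLS for the case where $\Psi_k\in L^{2n/(2n-\lambda)}$. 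The Gaussian route avoids this.

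Finally we let $k\to\infty$. Since $\Psi_k(x)\Psi_k(y)\to\Psi(x)\Psi(y)$ pointwise, with $|\Psi_k(x)\Psi_k(y)|\le|\Psi(x)||\Psi(y)|$, and the latter is integrable against $|x-y|^{-\lambda}$ by hypothesis, dominated convergence yields \eqref{eq:absolute-riesz-positive}.

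The main obstacle is justifying the Fubini interchange for the truncated functions. This works because $\Psi_k$ is bounded with compact support, so the triple integral of absolute values is finite. It is also this step that makes truncation necessary rather than working with $\Psi$ directly.
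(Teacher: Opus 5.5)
Your proof is correct, but it proves positivity for the truncated functions by a different mechanism than the paper.

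\textbf{The paper's route.} The paper first extends Lemma~\ref{lem:Riesz-positive} from $C_c^\infty$ to $L^\infty_c$ by mollification. That lemma rests on the distributional identity $\widehat{|\cdot|^{-\lambda}}=c_{n,\lambda}|\xi|^{\lambda-n}$ and Parseval in the $\mathcal S'$--$\mathcal S$ pairing. The error $\delta_\varepsilon=\rho_\varepsilon*\psi-\psi$ is controlled by splitting the kernel into near- and far-diagonal parts at a scale $\eta$. Only then does the paper truncate $\Psi$ to $\Psi_{M,R}$ and pass to the limit by dominated convergence, exactly as in your final step.

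\textbf{Your route.} You bypass both the tempered-distribution Fourier transform and the mollification estimate. The subordination formula writes $|x-y|^{-\lambda}$ as a positive superposition of Gaussians. Positive definiteness of each Gaussian on real $L^1$ functions needs only the classical Fourier transform of a Gaussian and Fubini, and Tonelli justifies the interchange of the $t$-integral.

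\textbf{Comparison.} Your argument is more self-contained, makes Lemma~\ref{lem:Riesz-positive} unnecessary, and applies verbatim to any kernel that is a completely monotone function of $|x-y|^2$. The paper's argument reuses the Fourier-multiplier representation it has already established. That is the natural choice if one later wants the explicit identity $c_{n,\lambda}\int|\widehat\Psi|^2|\xi|^{\lambda-n}\,d\xi$, although Theorem~\ref{thm:diagonal} needs only the sign.

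\textbf{A small correction to your closing remark.} The $t$-interchange does not actually require truncation. Tonelli gives
$\int_0^\infty t^{\lambda/2-1}\iint|\Psi(x)||\Psi(y)|e^{-t|x-y|^2}\,dx\,dy\,dt=\Gamma(\lambda/2)I_\lambda(\Psi)<\infty$
for $\Psi$ itself. What truncation really gives you is $\Psi_k\in L^1$, which is the hypothesis your Gaussian positivity identity uses.
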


\begin{proof}
	It suffices first to extend Lemma~\ref{lem:Riesz-positive} from smooth functions to bounded compactly supported functions. Let \(\psi\in L^\infty_c(\Rn)\) be real-valued. 
	Define
	\(\psi_\varepsilon:=\rho_\varepsilon*\psi,
	\delta_\varepsilon:=\psi_\varepsilon-\psi .
	\)
	For sufficiently small \(\varepsilon\), there exists a compact set
	\(K\subset\Rn\) and constants \(A,D<\infty\), independent of
	\(\varepsilon\), such that
	\[
	\operatorname{supp}\psi,\operatorname{supp}\psi_\varepsilon,
	\operatorname{supp}\delta_\varepsilon\subset K,
	\]
	and
	\[
	\|\psi\|_\infty+\|\psi_\varepsilon\|_\infty\leq A,
	\quad
	\|\delta_\varepsilon\|_\infty\leq D,
	\quad
	\|\delta_\varepsilon\|_{1}\to0
	\quad\text{as }\varepsilon\to0 .
	\]
	
	For \(H\in\{\psi,\psi_\varepsilon\}\) and every \(\eta>0\), we have
		\[
		\begin{aligned}
			\iint_{\Rn\times\Rn}
			\frac{|\delta_\varepsilon(x)|\,|H(y)|}{|x-y|^\lambda}\,dx\,dy
			&=
			\iint_{|x-y|\le\eta}
			\frac{|\delta_\varepsilon(x)|\,|H(y)|}{|x-y|^\lambda}\,dx\,dy\\
			&\quad+
			\iint_{|x-y|>\eta}
			\frac{|\delta_\varepsilon(x)|\,|H(y)|}{|x-y|^\lambda}\,dx\,dy\\
			&\le
			DA|K|\int_{B_\eta}|z|^{-\lambda}\,dz
			+\eta^{-\lambda}A|K|\|\delta_\varepsilon\|_{1}\\
			&=
			\frac{DA|K|\,|\Snn|}{n-\lambda}\eta^{n-\lambda}
			+\eta^{-\lambda}A|K|\|\delta_\varepsilon\|_{1}.
		\end{aligned}
		\]
		For fixed \(\eta>0\), letting \(\varepsilon\to0\) gives
		\[
		\limsup_{\varepsilon\to0}
		\iint_{\Rn\times\Rn}
		\frac{|\delta_\varepsilon(x)|\,|H(y)|}{|x-y|^\lambda}\,dx\,dy
		\le
		\frac{DA|K|\,|\Snn|}{n-\lambda}\eta^{n-\lambda},
		\quad H\in\{\psi,\psi_\varepsilon\}.
		\]
		Since \(\eta>0\) is arbitrary, letting \(\eta\to0\) yields
		\[
		\iint_{\Rn\times\Rn}
		\frac{|\delta_\varepsilon(x)|\,|H(y)|}{|x-y|^\lambda}\,dx\,dy
		\to0
		\quad(\varepsilon\to0),
		\quad H\in\{\psi,\psi_\varepsilon\}.
		\]
		Therefore
		\[
		\begin{aligned}
			&\left|
			\iint_{\Rn\times\Rn}
			\frac{\psi_\varepsilon(x)\psi_\varepsilon(y)-\psi(x)\psi(y)}
			{|x-y|^\lambda}\,dx\,dy
			\right|\\
			&\quad\le
			\iint_{\Rn\times\Rn}
			\frac{|\delta_\varepsilon(x)|\,|\psi_\varepsilon(y)|}
			{|x-y|^\lambda}\,dx\,dy
			+
			\iint_{\Rn\times\Rn}
			\frac{|\delta_\varepsilon(x)|\,|\psi(y)|}
			{|x-y|^\lambda}\,dx\,dy\\
			&\quad\to0
			\quad(\varepsilon\to0).
		\end{aligned}
		\]
	Since the quadratic form is nonnegative on \(\psi_\varepsilon\) by Lemma~\ref{lem:Riesz-positive}, it is also nonnegative on \(\psi\).
	
	Now let \(\Psi\) satisfy \(I_\lambda(\Psi)<\infty\). For \(M,R>1\), put
	\[
	\Psi_{M,R}(x):=\max\{-M,\min\{\Psi(x),M\}\}\mathbf 1_{B_R}(x).
	\]
	Since \(\Psi_{M,R}\in L_c^\infty(\Rn)\),
	\[
	\iint_{\Rn\times\Rn}
	\frac{\Psi_{M,R}(x)\Psi_{M,R}(y)}{|x-y|^\lambda}\,dx\,dy\ge0.
	\]
	Moreover, \(\Psi_{M,R}\to\Psi\) a.e. as \(M,R\to\infty\), and
	\[
	\left|
	\frac{\Psi_{M,R}(x)\Psi_{M,R}(y)-\Psi(x)\Psi(y)}{|x-y|^\lambda}
	\right|
	\le
	2\frac{|\Psi(x)|\,|\Psi(y)|}{|x-y|^\lambda}.
	\]
	Since \(I_\lambda(\Psi)<\infty\), the dominated convergence theorem gives
	\[
	\iint_{\Rn\times\Rn}
	\frac{\Psi_{M,R}(x)\Psi_{M,R}(y)}{|x-y|^\lambda}\,dx\,dy
	\to
	\iint_{\Rn\times\Rn}
	\frac{\Psi(x)\Psi(y)}{|x-y|^\lambda}\,dx\,dy
	\quad\text{as }M,R\to\infty.
	\]
	Passing to the limit proves \eqref{eq:absolute-riesz-positive}.
\end{proof}

\subsection{Proof of reduction results}

	\begin{proof}[Proof of Theorem~\ref{thm:diagonal}]
		
		Set \(s=1/(p-1)\) and \(U_i=f_i^{p-1}\), \(i=1,2,3\). By
		Proposition~\ref{prop:EL}, \((U_1,U_2,U_3)\) satisfies
		\eqref{eq:symm-system} with \(C=C_A(p,p,p)^{-1}\). Since the
		extremizing triple is normalized and the kernel is strictly
		positive, \(U_1,U_2,U_3\) are positive and finite a.e.
		
		We first establish the integrability needed below. Since
		\(U_i^s=f_i\), the elementary estimate
		\[
		|X^{p-1}-Y^{p-1}|\,|X-Y|
		\le C_p(X^p+Y^p),
		\quad X,Y\ge0,\quad p>1,
		\]
		gives
		\[
		|(U_1-U_2)(U_1^s-U_2^s)|
		=
		|f_1^{p-1}-f_2^{p-1}|\,|f_1-f_2|
		\le C_p(f_1^p+f_2^p)\in L^1(\Rn).
		\]
		Moreover, the trilinear Stein--Weiss boundedness theorem, applied to
		\(|f_1-f_2|,|f_1-f_2|,f_3\), gives
		\begin{equation}\label{integ est}
		\begin{aligned}
			&\iiint_{(\Rn)^3}
			\frac{|U_1(x)^s-U_2(x)^s|\,|U_1(y)^s-U_2(y)^s|U_3(z)^s}
			{|x|^\alpha |y|^\alpha |z|^\alpha
				|x-y|^\lambda |x-z|^\lambda |y-z|^\lambda}
			\,dx\,dy\,dz\\
			&\quad\le
			C_A(p,p,p)\norm{f_1-f_2}{p}^2
			\norm{f_3}{p}<\infty,
		\end{aligned}
		\end{equation}
		Replacing \((f_1,f_2,f_3)\) by \((f_2,f_3,f_1)\) gives the
		corresponding estimates for \(U_2,U_3,U_1\).
		
		We now compare the first two equations in
		\eqref{eq:symm-system}. 
		After relabeling the free variable in the second equation as $x$, subtract the second equation from the first,
		we obtain
		\[
		U_1(x)-U_2(x)
		=
		-C|x|^{-\alpha}\iint_{\Rn\times\Rn}
		\frac{\bigl(U_1(y)^s-U_2(y)^s\bigr)|y|^{-\alpha}
			U_3(z)^s|z|^{-\alpha}}
		{|x-y|^\lambda |x-z|^\lambda |y-z|^\lambda}\,dy\,dz.
		\]
		For \(M,R>1\), define
		\[
		D_{M,R}(x):=
		\max\{-M,\min\{U_1(x)^s-U_2(x)^s,M\}\}
		\mathbf 1_{B_R}(x).
		\]
		Then
		\[
		|D_{M,R}|\le |U_1^s-U_2^s|,
		\quad
		D_{M,R}\to U_1^s-U_2^s
		\quad\text{a.e. as }M,R\to\infty.
		\]
		The preceding two integrability estimates permit us to multiply
		the difference equation by \(D_{M,R}(x)\), integrate in \(x\),
		and apply Fubini's theorem. Thus
		\begin{equation}\label{eq:test-trunc}
			\int_{\Rn}(U_1-U_2)D_{M,R}\,dx = -C\iiint_{(\Rn)^3} \frac{D_{M,R}(x)(U_1(y)^s-U_2(y)^s)U_3(z)^s \, dx\,dy\,dz}{|x|^\alpha |y|^\alpha |z|^\alpha |x-y|^\lambda |x-z|^\lambda |y-z|^\lambda}.
		\end{equation}
		Letting \(M,R\to\infty\) and using the dominated convergence
		theorem gives
		\begin{equation}\label{eq:test-limit}
			\begin{aligned}
				\int_{\Rn}(U_1-U_2)(U_1^s-U_2^s)\,dx
				= &-C\iiint_{(\Rn)^3}
				\frac{\bigl(U_1(x)^s-U_2(x)^s\bigr)\bigl(U_1(y)^s-U_2(y)^s\bigr)}
				{|x|^\alpha |y|^\alpha |z|^\alpha |x-y|^\lambda} \\
				&\times
				\frac{U_3(z)^s}{|x-z|^\lambda |y-z|^\lambda}\,dx\,dy\,dz.
			\end{aligned}
		\end{equation}
		
		We now explain why the triple integral in
		\eqref{eq:test-limit} is nonnegative. By the absolute-integrability
		estimate \ref{integ est} and Tonelli's theorem,
		\begin{align*}
			&\int_{\Rn}U_3(z)^s|z|^{-\alpha}
			I_\lambda\!\left(
			\bigl(U_1(x)^s-U_2(x)^s\bigr)
			|x|^{-\alpha}|x-z|^{-\lambda}
			\right)\,dz\\
			&\quad=
			\iiint_{(\Rn)^3}
			\frac{|U_1(x)^s-U_2(x)^s|\,
				|U_1(y)^s-U_2(y)^s|U_3(z)^s}
			{|x|^\alpha |y|^\alpha |z|^\alpha
				|x-y|^\lambda |x-z|^\lambda |y-z|^\lambda}
			\,dx\,dy\,dz<\infty.
		\end{align*}
		Since \(U_3(z)^s|z|^{-\alpha}>0\) for a.e. \(z\), it follows that
		\[
		I_\lambda\!\left(
		\bigl(U_1(x)^s-U_2(x)^s\bigr)
		|x|^{-\alpha}|x-z|^{-\lambda}
		\right)<\infty
		\quad\text{for a.e. }z\in\Rn.
		\]
		For each such \(z\), the function inside
		\(I_\lambda\) is real-valued and measurable. Hence
		Lemma~\ref{lem:riesz-abs-positive} applies and gives
		\[
		\iint_{\Rn\times\Rn}
		\frac{
			\bigl(U_1(x)^s-U_2(x)^s\bigr)
			\bigl(U_1(y)^s-U_2(y)^s\bigr)}
		{|x|^\alpha |y|^\alpha
			|x-z|^\lambda |y-z|^\lambda |x-y|^\lambda}
		\,dx\,dy\ge0
		\]
		for a.e. \(z\). Finally, the same absolute-integrability estimate
		permits Fubini's theorem for the signed integral, so
		\begin{align*}
			&\iiint_{(\Rn)^3}
			\frac{\bigl(U_1(x)^s-U_2(x)^s\bigr)
				\bigl(U_1(y)^s-U_2(y)^s\bigr)U_3(z)^s}
			{|x|^\alpha |y|^\alpha |z|^\alpha |x-y|^\lambda |x-z|^\lambda |y-z|^\lambda}\,dx\,dy\,dz\\
			&\quad=
			\int_{\Rn}U_3(z)^s|z|^{-\alpha}
			\Bigg[
			\iint_{\Rn\times\Rn}
			\frac{\bigl(U_1(x)^s-U_2(x)^s\bigr)
				\bigl(U_1(y)^s-U_2(y)^s\bigr)\,dx\,dy}
			{|x|^\alpha |y|^\alpha |x-z|^\lambda |y-z|^\lambda |x-y|^\lambda}
			\Bigg]dz\ge0.
		\end{align*}
		
		On the other hand, strict monotonicity of \(t\mapsto t^s\) on
		\((0,\infty)\) gives
		\[
		(U_1-U_2)(U_1^s-U_2^s)\ge0\quad\text{a.e.}
		\]
		Hence the left-hand side of \eqref{eq:test-limit} is nonnegative,
		whereas its right-hand side is nonpositive. Equality follows, and
		thus
		\[
		(U_1-U_2)(U_1^s-U_2^s)=0
		\quad\text{a.e. in }\Rn.
		\]
		Consequently, \(U_1=U_2\) a.e. Repeating the same argument with
		the second and third equations gives \(U_2=U_3\) a.e. Therefore
		\(U_1=U_2=U_3\) a.e. Since \(f_i=U_i^s\), we obtain
		\(f_1=f_2=f_3\) a.e. Substituting \(U_1=U_2=U_3=:U\) into
		\eqref{eq:symm-system} gives \eqref{eq:weighted-scalar}.
	\end{proof}

The diagonal reduction above identifies the scalar equation \eqref{eq:weighted-scalar}.
The present argument uses only this reduction and does not require a classification of its solutions.

\subsection{Weighted and unweighted Kelvin invariance}\label{subsec:kelvin-invariance}

\begin{proof}[Proof of Proposition~\ref{prop:kelvin-weighted}]
	
	Set $d:=\alpha+\lambda$ and $I(x):=\rho^2x/|x|^2$. Then $I$ is an involution on $\Rn\setminus\{0\}$, with
	\[
	|I(a)-I(b)|=\frac{\rho^2|a-b|}{|a||b|},
	\quad dI(y)=\left(\frac\rho{|y|}\right)^{2n}dy.
	\]
	Fix $x\ne0$ and put $X=I(x)$. In the right-hand side of \eqref{eq:weighted-kelvin-eq} for $V_\rho$, set $Y=I(y)$ and $Z=I(z)$. Then
	\[
	|y|=\frac{\rho^2}{|Y|},
	\quad |z|=\frac{\rho^2}{|Z|},
	\quad dy=\left(\frac\rho{|Y|}\right)^{2n}dY,
	\quad dz=\left(\frac\rho{|Z|}\right)^{2n}dZ,
	\]
	and
	\[
	|x-y|=\frac{|x|}{|Y|}|X-Y|,
	\quad |x-z|=\frac{|x|}{|Z|}|X-Z|,
	\quad |y-z|=\frac{\rho^2|Y-Z|}{|Y||Z|}.
	\]
	Also
	\[
	V_\rho(y)^s=\left(\frac{|Y|}{\rho}\right)^{2ds}V(Y)^s,
	\quad
	V_\rho(z)^s=\left(\frac{|Z|}{\rho}\right)^{2ds}V(Z)^s.
	\]
	Substitution gives
	\begin{align*}
		& C|x|^{-\alpha}\iint
		\frac{V_\rho(y)^s|y|^{-\alpha}V_\rho(z)^s|z|^{-\alpha}}
		{|x-y|^\lambda |x-z|^\lambda |y-z|^\lambda}\,dy\,dz \\
		&=C|x|^{-\alpha-2\lambda}\rho^{-4ds-4\alpha+4n-2\lambda}
		\iint \frac{V(Y)^sV(Z)^s|Y|^{2ds+\alpha+2\lambda-2n}|Z|^{2ds+\alpha+2\lambda-2n}}
		{|X-Y|^\lambda |X-Z|^\lambda |Y-Z|^\lambda}\,dY\,dZ.
	\end{align*}
	Since $s=(n-d)/d$ and $d=\alpha+\lambda$, we have $2ds+\alpha+2\lambda-2n=-\alpha$. Thus the last display becomes
	\[
	C|x|^{-\alpha-2\lambda}\rho^{-4ds-4\alpha+4n-2\lambda}
	\iint \frac{V(Y)^s|Y|^{-\alpha}V(Z)^s|Z|^{-\alpha}}
	{|X-Y|^\lambda |X-Z|^\lambda |Y-Z|^\lambda}\,dY\,dZ.
	\]
	Since $V$ solves \eqref{eq:weighted-kelvin-eq} at $X$, the integral is $|X|^\alpha V(X)/C$. With $|X|=\rho^2/|x|$, this gives
	\[
	|x|^{-\alpha-2\lambda}\rho^{-4ds-4\alpha+4n-2\lambda}|X|^\alpha V(X)
	=\left(\frac\rho{|x|}\right)^{2(\alpha+\lambda)}V(X)=V_\rho(x).
	\]
\end{proof}

	The consequences of Proposition~\ref{prop:kelvin-weighted} differ
	in the unweighted and weighted cases. If \(\alpha=0\), then
	\(s=(n-\lambda)/\lambda\), and the scalar equation becomes
	\begin{equation}\label{eq:unweighted-scalar}
		V(x)=C\iint_{\Rn\times\Rn}
		\frac{V(y)^sV(z)^s}
		{|x-y|^\lambda |x-z|^\lambda |y-z|^\lambda}
		\,dy\,dz.
	\end{equation}
	This equation is translation invariant. Hence its origin-centered
	Kelvin invariance extends to every center: for \(x_0\in\Rn\) and
	\(\rho>0\), if \(V\) is a positive solution of
	\eqref{eq:unweighted-scalar}, then
	\[
	V_{x_0,\rho}(x):=
	\left(\frac{\rho}{|x-x_0|}\right)^{2\lambda}
	V\left(
	x_0+\frac{\rho^2(x-x_0)}{|x-x_0|^2}
	\right)
	\]
	is also a solution. Indeed, after translating \(x_0\) to the
	origin, this follows directly from
	Proposition~\ref{prop:kelvin-weighted} with \(\alpha=0\).
	
	If \(\alpha>0\), the weight \(|x|^{-\alpha}\) breaks translation
	invariance, so only the origin-centered conformal structure is
	retained. With conformal dimension \(d=\alpha+\lambda\), a Kelvin-compatible candidate profile is
	\[
	V_a(x)=A|x|^{-\alpha}
	\left(\frac{a}{a^2+|x|^2}\right)^\lambda.
	\]
	This ansatz is compatible with the origin-centered Kelvin transform, but Kelvin invariance alone does not show that it solves the weighted equation. Such a conclusion would require an additional weighted Selberg-type identity.
	
	Finally, the preceding arguments do not classify all
	maximizers or all positive finite-energy solutions. In the general
	weighted case, the compactness argument yields existence only.
	Even for \eqref{eq:unweighted-scalar}, a complete classification
	would require an additional argument, such as a moving-spheres
	theorem.
	
	\subsection{A conformal example at \texorpdfstring{$\lambda=n/3$}{lambda = n/3}}\label{subsec:conformal-example}
	
	We conclude with the explicit unweighted conformal case already contained \cite{Beckner1995,GrafakosMorpurgo1999}. Assume
	\[
	\alpha_1=\alpha_2=\alpha_3=0,
	\quad
	\alpha_{12}=\alpha_{13}=\alpha_{23}=\frac n3,
	\quad
	p_1=p_2=p_3=\frac32.
	\]
	Then
	\begin{equation}\label{eq:Q}
		Q(f_1,f_2,f_3)
		:=\iiint_{(\Rn)^3}
		\frac{f_1(x)f_2(y)f_3(z)}
		{|x-y|^{n/3}|x-z|^{n/3}|y-z|^{n/3}}\,dx\,dy\,dz.
	\end{equation}
	This is the case $k=3$ of Beckner's conformal multilinear fractional
	integral inequality \cite[Theorem~6]{Beckner1995}. Together with the
	three-fold Selberg integral formula of Grafakos--Morpurgo
	\cite[Corollary~1]{GrafakosMorpurgo1999}, that result yields
	\begin{equation}\label{eq:Qsharp}
		\abs{Q(f_1,f_2,f_3)}
		\le A_n\prod_{i=1}^3\norm{f_i}{3/2},
		\quad
		A_n=(2\pi)^n|\mathbb S^n|^{-1}
		\left(\frac{\Gamma(n/3)}{\Gamma(2n/3)}\right)^3.
	\end{equation}
	Here $|\mathbb S^n|$ denotes the surface measure of the unit sphere in
	$\mathbb R^{n+1}$. In particular, Beckner's result supplies the following
	conformal family of extremizing triples:
	\begin{equation}\label{eq:conformal-family-main}
		f_i(x)=c_i\left(\frac a{a^2+|x-x_0|^2}\right)^{2n/3},
		\quad c_i>0,
		\quad a>0,
		\quad x_0\in\Rn.
	\end{equation}
	
	For a nonzero diagonal member $f_1=f_2=f_3=f$ of this family, the
	Euler--Lagrange equation from Proposition~\ref{prop:EL} shows that, with
	$V=f^{1/2}$, there is a constant $\kappa>0$ such that
	\begin{equation}\label{eq:Vdiag}
		V(x)=\kappa\iint_{\Rn\times\Rn}
		\frac{V(y)^2V(z)^2}
		{|x-y|^{n/3}|x-z|^{n/3}|y-z|^{n/3}}\,dy\,dz
	\end{equation}
	for a.e. $x\in\Rn$. Consequently, the corresponding scalar profiles are
	\begin{equation}\label{eq:V-bubble}
		V(x)=A\left(\frac a{a^2+|x-x_0|^2}\right)^{n/3},
		\quad A>0,
		\quad a>0,
		\quad x_0\in\Rn.
	\end{equation}
	Since the right-hand side of \eqref{eq:Vdiag} is homogeneous of degree
	four in $V$, the amplitude $A$ may be adjusted so that the coefficient
	$\kappa$ is replaced by any prescribed positive constant. This example
	only records the conformal family furnished by the cited results; no
	classification of all extremizers or all finite-energy solutions is
	asserted here.

	\appendix
	\section{Five-singularity integral estimate}\label{app:five-singularity}
	
		\begin{proof}[Proof of Lemma~\ref{lem:five}]
		Let
		\[
		F(x,y):=
		\frac{1}
		{|x|^{\mu_1}|y|^{\mu_2}
			|x-e_1|^{\nu_1}|y-e_1|^{\nu_2}|x-y|^\lambda}.
		\]
		Throughout, $C>0$ denotes a constant independent of the dyadic
		indices. We first record the elementary local estimate
		\begin{equation}\label{eq:three-singularity-local-model}
			\iint_{B_1\times B_1}
			\frac{dx\,dy}{|x|^a|y|^b|x-y|^c}<\infty
		\end{equation}
		whenever
		\[
		0\le a,b,c<n,
		\qquad a+b+c<2n.
		\]
		Indeed, decompose $B_1\times B_1$, up to $(0,0)$, into
		\[
		E_m:=\bigl\{(x,y):2^{-m-1}<\max\{|x|,|y|\}\le2^{-m}\bigr\},
		\qquad m\ge0.
		\]
		After the scaling $(x,y)=2^{-m}(X,Y)$, the integral over $E_m$
		is bounded by $C2^{-m(2n-a-b-c)}$. The constant is finite because,
		on the fixed annulus
		\[
		\bigl\{(X,Y)\in B_1\times B_1:
		\tfrac12<\max\{|X|,|Y|\}\le1\bigr\},
		\]
		at least one of $|X|$ and $|Y|$ is bounded away from zero. For
		example, when $|X|>1/2$, split the $Y$-domain into $B_{1/4}$,
		$B_{1/4}(X)$, and the remaining region; the resulting integral is
		uniformly bounded because $b,c<n$. The case $|Y|>1/2$ is symmetric.
		Summing in $m$ proves \eqref{eq:three-singularity-local-model}.
		
		We next verify local integrability of $F$. Near $(0,0)$,
		\[
		F(x,y)\le C|x|^{-\mu_1}|y|^{-\mu_2}|x-y|^{-\lambda},
		\]
		so \eqref{eq:three-singularity-local-model} and \eqref{eq:five1}
		apply. After translating by $e_1$, the same argument near
		$(e_1,e_1)$ uses \eqref{eq:five2}. Near $(0,e_1)$ and $(e_1,0)$,
		$|x-y|$ is bounded away from zero and $F$ is dominated, respectively,
		by
		\[
		C|x|^{-\mu_1}|y-e_1|^{-\nu_2}
		\quad\text{and}\quad
		C|x-e_1|^{-\nu_1}|y|^{-\mu_2},
		\]
		which are locally integrable. At a point of the diagonal distinct
		from $(0,0)$ and $(e_1,e_1)$, only $|x-y|^{-\lambda}$ is singular;
		at every remaining point of the singular set, only one fixed-point
		weight is singular. Since all five individual exponents are less
		than $n$, it follows that $F\in L^1_{\mathrm{loc}}((\Rn)^2)$; hence
		\[
		\iint_{B_M\times B_M}F(x,y)\,dx\,dy<\infty
		\qquad(M>0).
		\]
		
		It remains to control infinity. Fix $R\ge8$ and set
		\[
		a_1:=\mu_1+\nu_1,
		\qquad a_2:=\mu_2+\nu_2,
		\qquad S:=a_1+a_2+\lambda.
		\]
		On $\{|x|\ge2R,\ |y|<R\}$, one has
		$|x-e_1|\simeq|x-y|\simeq|x|$, and hence
		\[
		F(x,y)\le
		C|x|^{-a_1-\lambda}|y|^{-\mu_2}|y-e_1|^{-\nu_2}.
		\]
		The $y$-factor is integrable on $B_R$, while the $x$-factor is
		integrable by \eqref{eq:five3}. Interchanging $x$ and $y$ gives the
		corresponding estimate on $\{|y|\ge2R,\ |x|<R\}$ by
		\eqref{eq:five4}. Since
		\[
		(\Rn)^2\subset (B_{2R}\times B_{2R})
		\cup\{|x|\ge2R,\ |y|<R\}
		\cup\{|y|\ge2R,\ |x|<R\}
		\cup\{|x|,|y|\ge R\},
		\]
		only the last region remains.
		
		For $k\ge0$, let
		\[
		r_k:=2^kR,
		\qquad D_k:=\{z\in\Rn:r_k\le|z|<2r_k\},
		\qquad
		I_{k,\ell}:=\iint_{D_k\times D_\ell}F(x,y)\,dx\,dy.
		\]
		Since $|z-e_1|\simeq|z|$ for $|z|\ge R$, if $k\ge\ell+2$ then
		$|x-y|\simeq|x|$ and
		\begin{equation}\label{eq:five-separated-shells}
			I_{k,\ell}
			\le C r_k^{n-a_1-\lambda}r_\ell^{n-a_2}.
		\end{equation}
		Put $\delta_1=n-a_1-\lambda<0$ and $\delta_2=n-a_2$.
		If $\delta_2<0$, the sum of $2^{\ell\delta_2}$ is uniformly
		bounded; if $\delta_2=0$, it contributes at most a factor $1+k$;
		and if $\delta_2>0$, it is bounded by $C2^{k\delta_2}$. In the
		last case,
		\[
		\delta_1+\delta_2=2n-S<0
		\]
		by \eqref{eq:five5}. Hence \eqref{eq:five-separated-shells} yields
		\[
		\sum_{k\ge2}\sum_{\ell=0}^{k-2}I_{k,\ell}<\infty.
		\]
		By symmetry,
		\[
		\sum_{\ell\ge2}\sum_{k=0}^{\ell-2}I_{k,\ell}<\infty,
		\]
		using \eqref{eq:five4} and \eqref{eq:five5}.
		
		Finally, suppose $|k-\ell|\le1$. With $x=r_kX$ and $y=r_kY$,
		the variables $X,Y$ range in fixed annuli bounded away from the
		origin. Since $r_k\ge R$, the quantities
		$|X-r_k^{-1}e_1|$ and $|Y-r_k^{-1}e_1|$ are also uniformly bounded
		above and below, and therefore
		\[
		I_{k,\ell}
		\le Cr_k^{2n-S}
		\iint_{\substack{1\le|X|<2\\[1pt]1/2\le|Y|<4}}
		|X-Y|^{-\lambda}\,dX\,dY
		\le Cr_k^{2n-S}.
		\]
		The last integral is finite because $\lambda<n$, and
		$2n-S<0$ by \eqref{eq:five5}. Since for each $k$ there are at most
		three such indices $\ell$, the comparable-scale sum also converges.
		Together with local integrability and the two one-variable far-field
		estimates, this proves \eqref{eq:five-int}.
	\end{proof}

	\backmatter

	\section*{Declarations}

	\bmhead{Funding}

	The authors declare that no funds, grants, or other support were received
	during the preparation of this manuscript.

	\bmhead{Competing interests}

	The authors have no relevant financial or non-financial interests to
	disclose.

	\bmhead{Ethics approval and consent to participate}

	Not applicable.

	\bmhead{Consent for publication}

	Not applicable.

	\bmhead{Data availability}

	This article is a theoretical study. No datasets were generated or analysed
	during the current study. All results and supporting arguments are contained
	in this article.

	\bmhead{Materials availability}

	Not applicable.

	\bmhead{Code availability}

	Not applicable.

	\bmhead{Author contributions}

	All authors contributed to the conception and development of the study and
	to the preparation of the manuscript. All authors read and approved the
	final manuscript.

	\bmhead{Originality statement}

	The authors declare that this manuscript presents original work, has not
	been published previously, and is not under consideration for publication
	elsewhere.

\end{document}